\documentclass[11pt]{amsart}

\usepackage{amsmath, amsbsy, amscd, amssymb, amsthm, ascmac, bm, enumerate}
\usepackage{graphicx}

\usepackage{etoolbox}
\patchcmd{\section}{\scshape}{\bfseries}{}{}
\makeatletter
\renewcommand{\@secnumfont}{\bfseries}
\makeatother


\theoremstyle{plain}
 \newtheorem{theorem}{Theorem}[section]
 
 \newtheorem{proposition}[theorem]{Proposition}
 
 \newtheorem{lemma}[theorem]{Lemma}
 
\theoremstyle{definition}
 \newtheorem{definition}[theorem]{Definition}
 \newtheorem{remark}[theorem]{Remark}
 
 \newtheorem{example}[theorem]{Example}
\renewcommand{\Re}{\operatorname{Re}}

\newcommand{\inner}[2]{\left\langle{#1},{#2}\right\rangle}

\numberwithin{equation}{section}
\numberwithin{figure}{section}

\title[T\MakeLowercase{riply periodic} ZMC \MakeLowercase{surfaces}]
      {\LARGE T\MakeLowercase{riply periodic zero mean curvature} \\[6pt]
      \MakeLowercase{surfaces in}
       L\MakeLowercase{orentz}-M\MakeLowercase{inkowski 3-space}}
\author[S.~F\MakeLowercase{ujimori}]
       {\large S\MakeLowercase{hoichi} F\MakeLowercase{ujimori}}
\address{Department of Mathematics \\
         Okayama University \\
         Tsushima-naka 3-1-1 \\
         Okayama 700-8530 \\ 
         Japan}
\email{fujimori@math.okayama-u.ac.jp}
\urladdr{http://www.math.okayama-u.ac.jp/~fujimori/}
\date{March 20, 2017}
\subjclass[2010]{Primary 53A10; Secondary 53A35, 53C50}
\keywords{zero mean curvature, triply periodic surface, fold singularity}
\thanks{
The author was partially supported by the Grant-in-Aid for Young
Scientists (B) No. 25800047 from Japan Society for the Promotion of Science.
}

\begin{document}

\begin{abstract}
We construct triply periodic zero mean curvature surfaces of mixed type 
in the Lorentz-Minkowski 3-space $\mathbb{L}^3$, with the same topology 
as the triply periodic minimal surfaces in the Euclidean 3-space $\mathbb{R}^3$, 
called Schwarz rPD surfaces. 
\end{abstract}

\maketitle

\section{Introduction}

Zero mean curvature surfaces in the Lorentz-Minkowski 3-space $\mathbb{L}^3$ 
are smooth surfaces with mean curvature zero wherever the mean curvature is 
defined.  
Having the mean curvature defined at all points is not expected, 
because these surfaces can change causal type, meaning that some parts may 
have spacelike tangent planes and other parts may have timelike tangent 
planes, with lightlike tangent planes at boundary points between these parts.  
An interesting aspect of these surfaces is precisely that they change 
causal type, often resulting in interesting singular and topological 
behaviors, and these surfaces have been investigated in 
\cite{Aka, FKKRSUYY2, FKKRUY, FKKRUY2, FRUYY3}.
One of the main tools for the construction of such surfaces 
is based on the fact that fold singularities of spacelike maximal surfaces 
have real analytic extensions to timelike minimal surfaces \cite{FKKRSUYY2}. 

In contrast to minimal surfaces in the Euclidean 3-space $\mathbb{R}^3$, 
the only known triply periodic zero mean curvature surfaces in $\mathbb{L}^3$ 
were those in a 1-parameter family constructed in \cite{FRUYY3}, 
while there are many known triply periodic minimal surfaces in $\mathbb{R}^3$, 
see for example \cite{FK, FW, Ka1, Sch}.

This motivates us to broaden our knowledge of triply periodic 
zero mean curvature surfaces in $\mathbb{L}^3$, 
and in this paper we construct a new 1-parameter family of triply periodic 
zero mean curvature surfaces in $\mathbb{L}^3$ based on the conjugate surfaces of 
the triply periodic minimal surfaces in $\mathbb{R}^3$ 
called the Schwarz H surfaces. 
This is the main original result of this paper, 
and the family here is interesting 
because it exhibits both a change of causal type, 
and also a greater complexity than the previously known examples.  
The method we use to construct the family is essentially the same as in 
\cite{FRUYY3}, but the surfaces here are less symmetric, and so the 
construction is more involved. 
It is expected that by using the method in this paper one could construct 
families of surfaces with more complicated topologies, based on the 
data of triply periodic minimal surfaces in $\mathbb{R}^3$ 
constructed in \cite{FW, Ka1}.
It is also expected that the family of surfaces we construct in this paper is 
a prototype for the study of the moduli space of triply periodic zero mean curvature surfaces. 

We remark that the surfaces constructed in this paper have the same topology 
and symmetry as Schwarz rPD minimal surfaces, 
not Schwarz H surfaces.  
(As for the symbols ``rPD'' and ``H'', see Remark~\ref{rm:tpms} 
in Appendix~\ref{sec:rpd}.)

\section{Preliminaries}
\label{sec:prelim}

We denote by $\mathbb{L}^3$ the Lorentz-Minkowsiki 3-space with indefinite metric 
$\inner{~}{~}=-dx_0^2+dx_1^2+dx_2^2$. 
Let $M$ be a Riemann surface. 
A conformal immersion $f:M\to\mathbb{L}^3$ is called 
a {\it spacelike surface} if the 
induced metric $ds^2=\inner{df}{df}$ is positive definite on $M$. 
A spacelike surface $f:M\to\mathbb{L}^3$ is called {\it maximal} if 
its mean curvature vanishes identically. 
In \cite{UY} a notion of {\it maxface} was introduced as a maximal surface with 
certain kind of singularities. 
More precisely, $f:M\to\mathbb{L}^3$ is called a {\it maxface} if there exists an 
open dense subset $W$ of $M$ such that the restriction $f|_W$ of $f$ to $W$ gives 
a conformal maximal immersion and $df(p)\ne 0$ for all $p\in M$. 

For maxfaces, a similar representation formula to Weierstrass representation for 
minimal surfaces in $\mathbb{R}^3$ is known. 

\begin{theorem}[Weierstrass-type representation \cite{K, UY}]\label{th:w-type-rep}
Let $(g,\,\eta )$ be a pair of a meromorphic function $g$ and a 
holomorphic differential $\eta$ on a Riemann surface $M$ so that 
$(1+|g|^2)^2\eta \bar{\eta}$
gives a Riemannian metric on $M$. 
We set 
\begin{equation}\label{eq:maxPhi}
\Phi = \begin{pmatrix}
          -2g\eta \\ (1+g^2)\eta \\ i(1-g^2)\eta\
        \end{pmatrix},
\end{equation}
where $i=\sqrt{-1}$.  
Then
\begin{equation}\label{eq:maxface}
f=\Re\int_{z_0}^z \Phi:M\to\mathbb{L}^3\qquad (z_0\in M)
\end{equation}
defines a maxface. 
The singular set $S(f)$ of $f$ is given by 
\[
S(f)=\{p\in M\;;\;|g(p)|=1\}.
\] 
Moreover, $f$ is single-valued on $M$ if and only if 
\begin{equation}\label{eq:period}
\mathrm{Re} \oint_\ell
\Phi =\bm{0}
\end{equation}
for any closed curve $\ell$ on $M$. 
Conversely, any maxface can be obtained in this manner. 
\end{theorem}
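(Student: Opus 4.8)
The plan is to follow the classical Weierstrass recipe for minimal surfaces in $\mathbb{R}^3$, extending the indefinite product $\inner{~}{~}$ complex bilinearly to $\mathbb{C}^3$ and carefully tracking the sign changes that the Lorentzian signature introduces. Writing $\Phi=(\phi_0,\phi_1,\phi_2)^{\mathsf T}$ for the entries in \eqref{eq:maxPhi} and $\eta=\hat\eta\,dz$ in a local coordinate $z$, the first thing I would check is that each $\phi_j$ is a holomorphic $1$-form and that the \emph{isotropy relation}
\[
\inner{\Phi}{\Phi}=-(-2g\eta)^2+\bigl((1+g^2)\eta\bigr)^2+\bigl(i(1-g^2)\eta\bigr)^2=0
\]
holds identically, which is a one-line algebraic cancellation. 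Since $f_z=\tfrac12\Phi$ is holomorphic, $f$ is automatically harmonic ($f_{z\bar z}=0$); combined with conformality this forces the mean curvature to vanish wherever the induced metric is nondegenerate, so $f$ restricts to a maximal immersion on its regular set.

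Next I would compute the two relevant real quadratic forms. Using $\inner{\Phi}{\Phi}=0$, the induced metric is $ds^2=\inner{df}{df}=\tfrac12\inner{\Phi}{\bar\Phi}$, and a short computation (via $|1\pm g^2|^2$ and the parallelogram identity $|1+g^2|^2+|1-g^2|^2=2+2|g|^4$) gives
\[
\inner{\Phi}{\bar\Phi}=\bigl(-4|g|^2+|1+g^2|^2+|1-g^2|^2\bigr)\eta\bar\eta=2\,(1-|g|^2)^2\,\eta\bar\eta ,
\]
so that $ds^2=(1-|g|^2)^2\eta\bar\eta$. Thus $ds^2$ is positive definite exactly on $\{\,|g|\ne1\,\}$, where $f$ is a spacelike maximal immersion, and it degenerates precisely on $\{\,|g|=1\,\}$. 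To see that these are genuine singular points of a maxface rather than branch points, I would instead compute the \emph{Euclidean} squared norm $\sum_j|\phi_j|^2=2\,(1+|g|^2)^2\,\eta\bar\eta$; the hypothesis that $(1+|g|^2)^2\eta\bar\eta$ be a Riemannian metric says exactly that this never vanishes, so $\Phi\ne\bm{0}$ and hence $df\ne\bm{0}$ on all of $M$, giving $S(f)=\{\,|g|=1\,\}$. Single-valuedness is then immediate from \eqref{eq:maxface}: the value of $\oint_\ell\Phi$ depends only on the homology class of the loop $\ell$, so $f$ is well defined on $M$ if and only if the real period condition \eqref{eq:period} holds.

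For the converse, I would start from an arbitrary maxface $f$ and set $\Phi:=2f_z$ on the open dense regular set $W$, where $f$ is a conformal maximal immersion; then the entries $\phi_j$ are holomorphic and satisfy the same isotropy relation. Solving it algebraically and using $\phi_1-i\phi_2=2\eta$ and $\phi_0=-2g\eta$, I would define
\[
\eta:=\tfrac12(\phi_1-i\phi_2),\qquad g:=\frac{-\phi_0}{\phi_1-i\phi_2},
\]
and verify directly that this pair reproduces \eqref{eq:maxPhi}. The main obstacle of the whole argument lies exactly here: one must show that $f_z$ extends holomorphically across the singular set $S(f)$ to all of $M$ — so that $\eta$ is holomorphic and $g$ meromorphic globally — and that the extended pair satisfies the nondegeneracy hypothesis. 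This is where the definition of a maxface does the real work: because $df\ne\bm{0}$ everywhere and $W$ is open and dense, $S(f)$ has empty interior and carries no branch points, which is what allows the a priori only locally defined Weierstrass data to be continued across it, with the poles of $g$ matched exactly by zeros of $\eta$ so that $(1+|g|^2)^2\eta\bar\eta$ stays a genuine Riemannian metric. Carrying out this removable-singularity argument carefully, rather than the purely formal computations of the direct part, is the step I expect to require the most attention.
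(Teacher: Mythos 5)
This theorem is quoted in the paper from Kobayashi and Umehara--Yamada (\cite{K, UY}) and is not proved there, so there is no in-paper argument to compare against; your proposal must be judged on its own. The direct direction of your outline is correct and standard: the isotropy cancellation $-4g^2\eta^2+\bigl((1+g^2)^2-(1-g^2)^2\bigr)\eta^2=0$ checks out, the parallelogram identity gives $\inner{\Phi}{\bar\Phi}=2(1-|g|^2)^2\eta\bar\eta$ and hence $ds^2=(1-|g|^2)^2\eta\bar\eta$ (consistent with the paper's Remark 2.2), and the Euclidean norm $\sum_j|\phi_j|^2=2(1+|g|^2)^2\eta\bar\eta$ correctly converts the metric hypothesis into $df\ne\bm{0}$ on all of $M$, which is exactly what the maxface definition requires and which pins down $S(f)=\{|g|=1\}$ (note that at poles of $g$, where $ds^2$ also has a factor that could naively vanish, the hypothesis forces $\eta$ to have a zero of order exactly twice the pole order, so no spurious singular points arise there). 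The period criterion is immediate as you say.

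The one genuine gap is the converse, and you have located it accurately but not closed it. Defining $\eta=\tfrac12(\phi_1-i\phi_2)$ and $g=-\phi_0/(\phi_1-i\phi_2)$ on the regular set $W$ is fine, but the assertion that $f_z$ ``extends holomorphically across $S(f)$'' does not follow merely from $W$ being open and dense and $df\ne\bm{0}$: one must first show that a maxface is harmonic in the distributional or real-analytic sense across the degenerate set, so that $f_z$ is holomorphic on all of $M$ rather than only on $W$, and then rule out the degenerate case $\phi_1-i\phi_2\equiv 0$ and check that the nondegeneracy $(1+|g|^2)^2\eta\bar\eta>0$ survives at points where $\phi_1-i\phi_2$ vanishes (where $g$ acquires poles). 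This removable-singularity step is the actual content of the cited references and is the part of your proposal that remains a statement of intent rather than a proof; as a reproduction of the classical argument the rest is sound.
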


The pair $(g,\,\eta)$ in Theorem~\ref{th:w-type-rep} is called 
the {\it Weierstrass data} of $f$. 

\begin{remark}
The first fundamental form $ds^2$ and the second fundamental form 
${\rm I}\!{\rm I}$ of the surface \eqref{eq:maxface} are given by
\[
ds^2=\left( 1-|g|^2\right)^2\eta\bar\eta , \qquad
{\rm I}\!{\rm I}=-\eta dg-\overline{\eta dg}.
\]
Moreover, 
$g|_{M\setminus S(f)}:M\setminus S(f)
\to(\mathbb{C}\cup\{\infty\})\setminus \{|z|=1\}$ 
coincides with the composition of the Gauss map 
\[
G|_{M\setminus S(f)}:M\setminus S(f)
\to H^2=\{x\in\mathbb{L}^3\;;\;\inner{x}{x}=-1\}
\]
of the maximal surface and the stereographic projection 
\[
\sigma:H^2\ni (x_0,x_1,x_2)
\mapsto\frac{x_1+ix_2}{1-x_0}\in\mathbb{C}\cup\{\infty\},
\]
that is, 
$g|_{M\setminus S(f)}=\sigma\circ G|_{M\setminus S(f)}$.
So we call $g$ the Gauss map of the maxface. 
\end{remark}

Generic singularities of maxfaces are classified in \cite{FSUY}. 
Moreover several criteria for singular points of maxfaces by using their 
Weierstrass data are given in \cite{FRUYY2, FSUY, UY}. 

\begin{definition}[Fold singular point \cite{FKKRSUYY2}]\label{df:fold}
Let $f:M\to\mathbb{L}^3$ be a maxface with Weierstrass data $(g,\eta)$. 
We denote by $S(f)$ the singular set of $f$, that is, 
$S(f)=\{p\in M\;;\;|g(p)|=1\}$.
\begin{enumerate}
\item A singular point $p\in S(f)$ of $f$ is called {\it non-degenerate} if 
$dg$ does not vanish at $p$. 
\item A regular curve $\hat\gamma$ on $M$ is called a 
{\it non-degenerate fold singularity} if it consists of non-degenerate singular 
points such that
\[
\Re\left(\frac{dg}{g^2\eta}\right)
\] 
vanish identically along the curve $\hat\gamma$. 
Each point on the non-degenerate fold singularity is called a 
{\it fold singular point}. 
\end{enumerate}
\end{definition}

\begin{remark}
Let $\Sigma$ be a smooth 2-manifold and $f:\Sigma\to\mathbb{L}^3$ a smooth map. 
A singular point $p\in\Sigma$ of $f$ is called {\it fold singularity} if there 
exist a local coordinate system $(U;\varphi)$ ccentered at $p\in\Sigma$ and a 
diffeomorphism $\psi$ of $\mathbb{L}^3$ such that 
$(\psi\circ f\circ\varphi^{-1})(u,v)=(u,v^2,0)$. 

In~\cite[Lemma~2.17]{FKKRSUYY2}, 
it is shown that a non-degenerate fold singularity of a maxface is 
indeed a fold singularity.
\end{remark}

A regular curve $\gamma:(a,b)\to \mathbb{L}^3$
is called {\it null\/} or {\it isotropic\/}
if $\gamma'(t)=(d\gamma/dt)(t)$ is a lightlike vector 
for all $t\in (a,b)$.
\begin{definition}[Non-degenerate null curve \cite{FKKRSUYY2}]
 A null curve $\gamma:(a,b)\to \mathbb{L}^3$
 is called {\it degenerate} or {\it non-degenerate\/} at $t=c$
 if $\gamma''(c)$ is or is not proportional
 to the velocity vector $\gamma'(c)$, respectively.
 If $\gamma$ is non-degenerate at each $t\in (a,b)$,
 it is called a {\it non-degenerate\/} null curve.
\end{definition}

\begin{theorem}[Analytic extension of maxface \cite{FKKRSUYY2}]
 Let $f:M\to \mathbb{L}^3$ be a maxface
 which has non-degenerate fold singularities 
 along a singular curve $\hat\gamma:(a,b)\to M$.
 Then $\gamma=f\circ \hat\gamma$ is a 
 non-degenerate null curve and the image of the map
 \begin{equation}\label{eq:Bj}
   f^*(u,v)=\frac{\gamma(u+v)+\gamma(u-v)}{2}
 \end{equation}
 is real analytically connected to the image of
 $f$ along $\gamma$ as a timelike minimal immersion.
 Conversely, any real analytic immersion with mean curvature, 
 whereever well-defined, equal to zero
 which changes type across a non-degenerate
 null curve is obtained as a real analytic extension
 of non-degenerate fold singularities of a maxface.
\end{theorem}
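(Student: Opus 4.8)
The plan is to route the entire statement through a single reflection identity for the Weierstrass data $(g,\eta)$ of $f$; once that identity is established, the nullity of $\gamma$, the Björling-type formula, the timelike minimality of $f^{*}$, and the real-analytic matching all follow almost formally. First I would record the nullity of $\gamma$ for free: on $S(f)=\{|g|=1\}$ the induced metric $ds^{2}=(1-|g|^{2})^{2}\eta\bar\eta$ vanishes identically, so $\hat\gamma^{*}ds^{2}\equiv 0$ and hence $\inner{\gamma'}{\gamma'}\equiv 0$. Since $dg\ne 0$, the singular set is a regular real-analytic arc, so I may choose a conformal coordinate $z=u+iv$ near a point of $\hat\gamma$ with $\hat\gamma=\{v=0\}$; write $\eta=\hat\eta\,dz$ and $\Phi=\bm{\phi}\,dz$ with $\bm{\phi}=\hat\eta\,\bigl(-2g,\,1+g^{2},\,i(1-g^{2})\bigr)$.

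The heart of the argument, and the step I expect to be the main obstacle, is the following reflection computation. On $\{v=0\}$ we have $|g|=1$, so $\overline{g(u)}=1/g(u)$, and the Schwarz reflection principle gives $\overline{g(\bar z)}=1/g(z)$, whence $\overline{g'(\bar z)}=-g'(z)/g(z)^{2}$. The fold condition says that $dg/(g^{2}\eta)=g'/(g^{2}\hat\eta)$ is purely imaginary on $\{v=0\}$, so reflecting this relation and substituting the two identities for $g$ yields, after simplification, $\overline{\hat\eta(\bar z)}=g^{2}\hat\eta$. Feeding $\overline{g(\bar z)}=1/g$ and $\overline{\hat\eta(\bar z)}=g^{2}\hat\eta$ into the three components of $\bm{\phi}$, each component collapses and I obtain the clean reality $\overline{\bm{\phi}(\bar z)}=\bm{\phi}(z)$. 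Consequently $F:=\int\Phi$ (with $z_{0}$ real) satisfies $\overline{F(\bar z)}=F(z)$, so $F$ is real on $\{v=0\}$ and $\gamma(u)=f(u,0)=F(u)$; denote by $\gamma(w)=F(w)$ its holomorphic extension.

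Everything downstream is then bookkeeping. Using $\overline{F(w)}=F(\bar w)$ I get
\[
f(u,v)=\Re F(u+iv)=\tfrac12\bigl(F(u+iv)+F(u-iv)\bigr)=\tfrac12\bigl(\gamma(u+iv)+\gamma(u-iv)\bigr),
\]
while by definition $f^{*}(u,v)=\tfrac12\bigl(\gamma(u+v)+\gamma(u-v)\bigr)$. Hence both $f$ and $f^{*}$ are restrictions of the single real-analytic map $\mathcal F(u,w)=\tfrac12\bigl(\gamma(u+w)+\gamma(u-w)\bigr)$: the imaginary $w$-axis returns the spacelike maxface $f$, the real $w$-axis returns $f^{*}$, and $w=0$ is the shared curve $\gamma$. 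The joint real-analyticity of $\mathcal F$ is exactly the asserted real-analytic connection across $\gamma$, with the change of causal type visible as $w$ rotates from imaginary to real. For $f^{*}$ alone, in the null coordinates $\xi=u+v$, $\zeta=u-v$ one has $\partial_{\xi}\partial_{\zeta}f^{*}=0$, giving zero mean curvature, and induced metric $\tfrac12\inner{\gamma'(\xi)}{\gamma'(\zeta)}(du^{2}-dv^{2})$, which is Lorentzian, i.e.\ $f^{*}$ is timelike.

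Finally I would settle non-degeneracy and the converse. Immersivity of $f^{*}$ for $v\ne0$ and non-degeneracy of $\gamma$ both reduce to the linear independence of $\gamma'(\xi)$ and $\gamma'(\zeta)$ for distinct nearby parameters; since on $\{v=0\}$ one has $\gamma'=\bm{\phi}$, and $\gamma''\parallel\gamma'$ would force $\bigl(-2g,1+g^{2},i(1-g^{2})\bigr)$ and $\bigl(-1,g,-ig\bigr)$ to be proportional (impossible, as this needs $g^{2}=1$ and $g^{2}=-1$ at once), the hypothesis $dg\ne0$ yields $\gamma''\not\parallel\gamma'$, and two non-parallel null vectors in $\mathbb{L}^{3}$ have nonzero inner product. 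For the converse I would run the construction backwards: a timelike minimal immersion written in null coordinates as $A(\xi)+B(\zeta)$ has null $A',B'$, its type-change locus forces $A'\parallel B'$ there and identifies the boundary null curve $\gamma$, and the continuation $v\mapsto iv$ produces a spacelike maxface whose data satisfy $\overline{\bm{\phi}(\bar z)}=\bm{\phi}(z)$; reversing the second step shows that this reality is precisely the fold condition. Throughout, the only genuinely delicate point is the reflection computation establishing $\overline{\bm{\phi}(\bar z)}=\bm{\phi}(z)$.
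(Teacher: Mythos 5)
The paper does not actually prove this theorem; it is quoted from the reference \cite{FKKRSUYY2}, so there is no in-paper argument to compare against. Measured against the proof in that reference, your forward direction is essentially the standard argument and is correct: the reflection identity is the right pivot, and your derivation of it checks out. From $|g|=1$ on $\{v=0\}$ one gets $\overline{g(\bar z)}=1/g(z)$ and $\overline{g'(\bar z)}=-g'(z)/g(z)^2$; feeding these into the reflected fold condition gives $\overline{\hat\eta(\bar z)}=g^2\hat\eta(z)$, and all three components of $\bm{\phi}$ then satisfy $\overline{\bm{\phi}(\bar z)}=\bm{\phi}(z)$, so $F=\int\Phi$ is real on the axis and $f=\tfrac12\bigl(\gamma(u+iv)+\gamma(u-iv)\bigr)$. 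The nullity of $\gamma'=\bm{\phi}$ (isotropy of the Weierstrass integrand), the non-degeneracy via the impossibility of $(-2g,1+g^2,i(1-g^2))\parallel(-1,g,-ig)$, and the Lorentzian signature of the induced metric of $f^*$ are all correct.

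Two points deserve tightening. First, exhibiting $f$ and $f^*$ as slices of the three-real-parameter family $\mathcal F(u,w)$, $w\in\mathbb{C}$, does not by itself show that the union of the two \emph{images} is a single real-analytic surface. The clean way to finish is to note that both expressions are even in $v$, so $f(u,v)=h(u,-v^2)$ and $f^*(u,v)=h(u,v^2)$ for the single real-analytic map $h(u,t)=\sum_{k\ge0}\gamma^{(2k)}(u)\,t^k/(2k)!$, which is an immersion near $t=0$ precisely because $\gamma''\not\parallel\gamma'$; this $h$ is the real-analytic connection (and simultaneously exhibits the fold normal form). Second, the converse is only sketched: a timelike minimal immersion in null coordinates is $A(\xi)+B(\zeta)$ with \emph{a priori} two unrelated null curves, and one must argue that the hypothesis that the degeneracy locus $\{\inner{A'(\xi)}{B'(\zeta)}=0\}$ is attained along a non-degenerate null curve forces $B$ to be an affine reparametrization of $A$ (this is where real-analyticity and non-degeneracy are genuinely used), before the substitution $v\mapsto iv$ and the reversal of the reflection computation can identify the reality condition with the fold condition. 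As written, that step is asserted rather than proved; the rest is sound.
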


We call an immersion in $\mathbb{L}^3$ with mean curvature, 
whereever well-defined, equal to zero 
a {\it zero mean curvature $($ZMC$\,)$ surface}.

\section{Schwarz H-type ZMC surfaces}
\label{sec:schwarz-h-zmc}

For a constant $a\in (0,\infty)$, we set $M_a$ a Riemann surface 
of genus 3 defined by the hyperelliptic curve
\[
  w^2=z(z^3+a^3)(z^3+a^{-3}). 
\]
Consider the family $\{f_a\}_{0<a<\infty}$ of maxfaces 
\begin{equation}\label{eq:w-rep-max}
 f_a=\begin{pmatrix}x_0 \\ x_1 \\ x_2\end{pmatrix}
    =\Re\int
   \begin{pmatrix}
     -2g \\ 1+g^2 \\ i(1-g^2) 
   \end{pmatrix} \eta
\end{equation}
in $\mathbb{L}^3$ with the Weierstrass data
\begin{equation}\label{eq:w-data-conj}
  g=z,\qquad \eta=i\frac{dz}{w}.
\end{equation}
The singular set of $f_a$ is $\{|z|=1\}$. 
It is easy to verify that each singular point is fold singularity.

We define a $\mathbb{C}^3$-valued 1 form $\Phi_a$ on $M_a$ by
\[
  \Phi_a =\begin{pmatrix}
           -2g \\ 1+g^2 \\ i(1-g^2) 
           \end{pmatrix} \eta .
\]
Direct computations show the following lemma.

\begin{lemma}[Symmetries of the surface]
Define anti-holomorphic maps $\psi_j :M_a\to M_a$ $(j=1,2,3)$ as follows:
\begin{align*}
\psi_1(z,w)&=(\bar{z},\bar{w}), \\
\psi_2(z,w)&=(e^{2\pi i/3}\bar{z},e^{\pi i/3}\bar{w}), \\
\psi_3(z,w)&=\left(\frac{1}{\bar{z}},\frac{\bar{w}}{\bar{z}}\right).
\end{align*}
Then we have the following:
\begin{align*}
\psi_1^*\Phi_a&=
   \begin{pmatrix}
     -1 & 0 & 0 \\
      0 &-1 & 0 \\
      0 & 0 & 1
   \end{pmatrix}\overline{\Phi}_a, \\
\psi_2^*\Phi_a&=
   \begin{pmatrix}
      1 & 0           & 0 \\
      0 &-\cos(\pi/3) & \sin(\pi/3) \\
      0 & \sin(\pi/3) & \cos(\pi/3)
   \end{pmatrix}\overline{\Phi}_a, \\
\psi_3^*\Phi_a&=\overline{\Phi}_a.
\end{align*}
\end{lemma}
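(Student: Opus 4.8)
The three identities are parallel, so the plan is to treat the $\psi_j$ uniformly and to isolate the one phase cancellation that produces each matrix. First I would check that each $\psi_j$ is a well-defined anti-holomorphic automorphism of $M_a$, i.e.\ that it preserves the hyperelliptic relation $w^2=P(z)$ with $P(z)=z(z^3+a^3)(z^3+a^{-3})$. Since $a$ is real, $P$ has real coefficients, so $\overline{P(z)}=P(\bar z)$; this settles $\psi_1$ at once. For $\psi_2$ and $\psi_3$ the point is that $P$ is assembled from $z$ and $z^3$: the factor $z^3$ is invariant under multiplication by $e^{2\pi i/3}$, and replacing $z$ by $1/z$ rescales $P$ by a power of $z$ while interchanging the two cubic factors through the symmetry $a\leftrightarrow a^{-1}$. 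Matching the $w$-coordinate of each $\psi_j$ against the resulting scaling of $P$ confirms well-definedness and, in effect, pins down the transformation law for $w$.

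Next, with $g=z$ and $\eta=i\,dz/w$, I would compute $\psi_j^*g=\psi_j^*z$ and $\psi_j^*\eta=i\,d(\psi_j^*z)/(\psi_j^*w)$ directly. The only subtlety is that each $\psi_j$ is anti-holomorphic, so $\psi_j^*z$ is an anti-holomorphic function and $d(\psi_j^*z)$ is a multiple of $d\bar z$; hence every pulled-back differential is naturally expressed through $\bar\eta=\overline{\eta}=-i\,d\bar z/\bar w$ rather than through $\eta$. Carrying this out yields $\psi_1^*g=\bar g$ with $\psi_1^*\eta=-\bar\eta$, and $\psi_2^*g=e^{2\pi i/3}\bar g$ with $\psi_2^*\eta=-e^{i\pi/3}\bar\eta$, together with the analogous formulas for $\psi_3$.

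Finally I would substitute these into the three entries $-2g\eta$, $(1+g^2)\eta$, $i(1-g^2)\eta$, expand, and collect the outcome as a constant-matrix multiple of $\overline{\Phi}_a$, whose entries are $-2\bar g\,\bar\eta$, $(1+\bar g^2)\bar\eta$ and $-i(1-\bar g^2)\bar\eta$. Each matrix is forced by a single cancellation: for $\psi_2$ the identity $e^{2\pi i/3}\cdot e^{i\pi/3}=-1$ returns the first entry to $-2\bar g\,\bar\eta$, giving the $(1,1)$-entry $1$, while the factors $e^{\pm 4\pi i/3}$ accompanying $\bar g^2$ in the second and third entries recombine into a rotation by $\pi/3$, which is exactly the $2\times 2$ block built from $\pm\cos(\pi/3)$ and $\sin(\pi/3)$. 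The maps $\psi_1$ and $\psi_3$ are the same computation with the phases set to trivial values, producing $\mathrm{diag}(-1,-1,1)$ and the identity matrix respectively.

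The computations are routine, so the only real difficulty is bookkeeping. The step most likely to trip one up is keeping the anti-holomorphic pullbacks straight, so that $d\bar z$ (not $dz$) appears and $\overline{\Phi}_a$ (not $\Phi_a$) is the correct target. I would also verify the second and third rows of the $\psi_2$ matrix simultaneously rather than one component at a time, to be certain the off-diagonal $\sin(\pi/3)$ entries carry mutually consistent signs and genuinely assemble into a rotation.
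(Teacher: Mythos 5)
Your proposal is correct and coincides with what the paper does: the paper gives no written argument beyond the phrase ``Direct computations show the following lemma,'' and your outline is precisely that computation, with the phase bookkeeping for $\psi_2$ (the cancellation $e^{2\pi i/3}e^{\pi i/3}=-1$ and the recombination of $e^{\pm 4\pi i/3}$ into the rotation block) carried out correctly. One small point worth noting: matching the $w$-coordinate against the scaling $P(1/\bar z)=P(\bar z)/\bar z^{8}$, as your first step prescribes, actually forces $\psi_3(z,w)=\bigl(1/\bar z,\ \bar w/\bar z^{4}\bigr)$ rather than $\bar w/\bar z$ as printed, and only with that correction does $\psi_3^*\eta=\bar z^{2}\bar\eta$ yield $\psi_3^*\Phi_a=\overline{\Phi}_a$.
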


By the above lemma, we can consider 
\[
  \Omega_a^{\mathrm max}=\{f_a(z)\;;\;|z|\le 1,\;0\le \arg z \le \pi/3\}
\]
as the fundamental piece of the maxface, that is, the entire maxface consists of 
pieces each of which is congruent to $\Omega_a^{\mathrm max}$. 

\begin{lemma}
In $\Omega_a^{\mathrm max}$, the images of 
$\{0\le |z|\le 1,\;\arg z=0\}$ and 
$\{a\le |z|\le 1,\;\arg z=\pi/3\}$ by $f_a$ are straight lines, 
and the image of  
$\{a\le |z|\le 1,\;\arg z=\pi/3\}$ by $f_a$ is a curve in some timelike plane. 
\end{lemma}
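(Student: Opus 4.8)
The plan is to treat the three boundary arcs of the fundamental domain separately and in each case read off the geometry directly from the tangent vector $\Re\Phi_a$ of $f_a$. Recall that the boundary of $\{|z|\le 1,\ 0\le\arg z\le\pi/3\}$ consists of the two radial segments $\arg z=0$ and $\arg z=\pi/3$, together with the singular arc $|z|=1$ (where the folds occur, and which is not part of the claim). The organizing principle is simple: if $\Re\Phi_a$ is everywhere proportional to a single fixed vector along an arc, the image is a straight segment, whereas if $\Re\Phi_a$ merely stays inside a fixed $2$-plane, the image is a planar curve in a translate of that plane. The essential feature is that the branch point $z=ae^{i\pi/3}$, where $w=0$, splits the ray $\arg z=\pi/3$ into two pieces on which $w$ sits on different sheets, and this is exactly what separates straight-line from planar behavior.

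First I would handle $\arg z=0$. Writing $z=t\in(0,1)$, the radicand $z(z^3+a^3)(z^3+a^{-3})$ is real and positive, so $w$ is real and $\eta=i\,dt/w$ is purely imaginary. Substituting $g=t$ into $\Phi_a$, the $x_0$- and $x_1$-components are purely imaginary, so their real parts vanish, and only the $x_2$-component survives; hence the tangent is always parallel to $e_2=(0,0,1)$ and the image is a segment of a line parallel to the $x_2$-axis. This is consistent with $\psi_1$, whose matrix $\mathrm{diag}(-1,-1,1)$ is the half-turn about the $x_2$-axis, fixing a line.

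Next, for $\arg z=\pi/3$ I set $z=te^{i\pi/3}$, so that $z^3=-t^3$ and
\[
 w^2 = t\,e^{i\pi/3}\,(a^3-t^3)(a^{-3}-t^3).
\]
For $0<t<a$ both real factors are positive, so $\arg(w^2)=\pi/3$ and $w=|w|e^{i\pi/6}$; for $a<t<1$ the factor $a^3-t^3$ changes sign, so $\arg(w^2)=\pi/3+\pi$ and $w=|w|e^{2\pi i/3}$. Substituting $w=|w|e^{2\pi i/3}$ on the outer piece, a short computation gives $\Re\Phi_a=\bigl((1-t^2)/(2|w|)\bigr)(0,\sqrt3,-1)\,dt$, a fixed direction, so $\{a\le|z|\le 1,\ \arg z=\pi/3\}$ maps to a straight line. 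Substituting $w=|w|e^{i\pi/6}$ on the inner piece gives instead $\Re\Phi_a$ proportional to $2t\,e_0-\tfrac{1+t^2}{2}(0,1,\sqrt3)$ with $e_0=(1,0,0)$; the $(x_1,x_2)$-part is always a multiple of $(1,\sqrt3)$, so the tangent stays in the fixed plane $P=\mathrm{span}\{e_0,(0,1,\sqrt3)\}$, while the $x_0$-coefficient $2t$ is not proportional to the other coefficient, so the direction genuinely varies. Thus $\{0\le|z|\le a,\ \arg z=\pi/3\}$ maps to a non-straight curve lying in a translate of $P$, and since $P$ contains the timelike vector $e_0$ this plane is timelike.

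Finally, I would record the symmetry interpretation as a check. The inner piece is the fixed-point set of $\psi_2$, whose matrix is a reflection across a timelike plane parallel to $P$, while the outer piece is fixed not by $\psi_2$ but by $\psi_2\circ h$, where $h(z,w)=(z,-w)$ is the hyperelliptic involution; since $h^*\Phi_a=-\Phi_a$, the associated isometry has matrix $-A_2$, a half-turn about the axis in direction $(0,\sqrt3,-1)$, which again forces a straight line. I expect the only real obstacle to be the branch-point bookkeeping at $z=ae^{i\pi/3}$: one must fix the branch of $w$ correctly on each side of $t=a$, since it is precisely the jump of $\arg w$ by $\pi/2$ there that converts the planar arc into the straight arc. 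Once the branch is pinned down, everything reduces to the substitutions above.
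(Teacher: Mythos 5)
Your proof is correct, and it takes a genuinely different route from the paper's. The paper's entire proof consists of computing the Hopf differential $Q=\eta\,dg=i\,dz^2/w$ along the two boundary rays and observing that $Q\in i\mathbb{R}$ on $\{\arg z=0\}$ and on $\{a\le t\le 1,\ \arg z=\pi/3\}$ while $Q\in\mathbb{R}$ on $\{0\le t\le a,\ \arg z=\pi/3\}$; it then invokes the standard dichotomy that a symmetry curve with imaginary Hopf differential is an asymptotic geodesic, hence a straight line, and one with real Hopf differential is a planar curvature line (the geodesic property being supplied implicitly by the preceding symmetry lemma, since these arcs are fixed-point sets of the $\psi_j$). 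You instead compute $\Re\Phi_a$ directly on each arc and read off that the tangent is either a fixed vector or confined to a fixed $2$-plane; your branch bookkeeping at $z=ae^{i\pi/3}$ (where $\arg w$ jumps from $\pi/6$ to $2\pi/3$) is exactly right and is the same mechanism that flips $Q$ from real to imaginary in the paper's version. Your approach is longer but more self-contained and more informative: it produces the explicit line directions $(0,0,1)$ and $(0,\sqrt3,-1)$ and the explicit timelike plane $\mathrm{span}\{(1,0,0),(0,1,\sqrt3)\}$, which the paper only recovers later in the lemmas about $f_a^*(0,v)$ and $f_a^*(\pi/3,v)$, and it does not rely on the folklore about Hopf differentials along symmetry curves. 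The closing symmetry check via $\psi_2$ and the hyperelliptic involution is consistent with the paper's symmetry lemma and is a nice independent confirmation. (Note also that you have silently, and correctly, repaired the typo in the statement: the planar arc is $\{0\le|z|\le a,\ \arg z=\pi/3\}$, not the set written twice in the lemma.)
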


\begin{proof}
Consider the Hopf differential 
\[
Q=\eta dg = i\frac{dz^2}{w}
\]
of $f_a$. 
If $z=t$ ($0\le t \le 1$), then $Q\in i\mathbb{R}$. 
Also, if $z=e^{\pi i/3}t$ ($0\le t \le 1$), then
\[
Q=\frac{-dt^2}{\sqrt{t(t^3-a^3)(t^3-a^{-3})}} \in
\begin{cases}
 \mathbb{R} & (0\le t \le a) \\
i\mathbb{R} & (a\le t \le 1)
\end{cases}
\]
this completes the proof. 
\end{proof}

Next we consider the singular curve $\gamma$ of $f_a$. 
The singular curve is the image of $z=e^{it}$ ($0\le t \le \pi/3$).   
Hence we can write 
\[
\gamma (s) =\!\int_0^s\!
        \begin{pmatrix}
         1 \\ -\cos t \\ -\sin t
        \end{pmatrix}\xi(t) dt, 
\;\;
\xi(t)=\frac{2}{\sqrt{2\cos 3t +a^3+a^{-3}}}
\;\; \left( 0\le s\le\frac{\pi}{3}\right)
\]
by a direct computation. 
Thus if we set 
\begin{equation}\label{eq:timef}
f_a^*(u,v)=\frac{1}{2}\big(\gamma(u+v)+\gamma(u-v)\big),
\end{equation}
then $f_a^*$ is a timelike minimal surface such that 
$\{v=0\}$ corresponds to the fold singularities 
and $f_a^*$ is the analytic extension of the maximal surface $f_a$. 

Arguments similar to those in \cite{FRUYY3} show the following two lemmas. 

\begin{lemma}[{\cite[Lemma 3.1]{FRUYY3}}]
$f_a^*(u,v)$ is an immersion on $(u,v)\in\mathbb{R}\times (0,\pi)$. 
\end{lemma}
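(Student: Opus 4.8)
The plan is to reduce the immersion property to a statement about the tangent vectors of the null curve $\gamma$, and then to read it off from the explicit form of $\gamma'$. First I would record that, although $\gamma$ was written down only for $0\le s\le\pi/3$, the symmetries $\psi_1,\psi_2,\psi_3$ (together with the reflection principle underlying the analytic extension) extend $\gamma$ to a real analytic null curve on all of $\mathbb{R}$, with
\[
\gamma'(s)=\begin{pmatrix}1\\ -\cos s\\ -\sin s\end{pmatrix}\xi(s),\qquad \xi(s)=\frac{2}{\sqrt{2\cos 3s+a^3+a^{-3}}}.
\]
Since $2\cos 3s+a^3+a^{-3}\ge a^3+a^{-3}-2>0$ whenever $a\ne 1$, the function $\xi$ is smooth and positive on $\mathbb{R}$, so $f_a^*(u,v)=\tfrac12\big(\gamma(u+v)+\gamma(u-v)\big)$ is well defined and real analytic on the whole strip $\mathbb{R}\times(0,\pi)$, with $\gamma'(s)\ne\bm{0}$ everywhere.

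Next I would differentiate. With the abbreviations $\alpha=\gamma'(u+v)$ and $\beta=\gamma'(u-v)$ one has $\partial f_a^*/\partial u=\tfrac12(\alpha+\beta)$ and $\partial f_a^*/\partial v=\tfrac12(\alpha-\beta)$, so the pair $(f_u^*,f_v^*)$ is obtained from $(\alpha,\beta)$ by an invertible linear map. Hence $f_a^*$ is an immersion at $(u,v)$ precisely when $\gamma'(u+v)$ and $\gamma'(u-v)$ are linearly independent in $\mathbb{L}^3$. Because $\gamma'$ is null, the induced metric takes the simple form
\[
ds^2=\tfrac12\,\inner{\alpha}{\beta}\,(du^2-dv^2),\qquad \inner{\gamma'(s_1)}{\gamma'(s_2)}=\xi(s_1)\xi(s_2)\big(\cos(s_1-s_2)-1\big),
\]
the second identity coming from $\inner{(1,-\cos s_1,-\sin s_1)}{(1,-\cos s_2,-\sin s_2)}=\cos(s_1-s_2)-1$.

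Finally I would evaluate the coefficient on the strip. For $(u,v)\in\mathbb{R}\times(0,\pi)$ the difference of arguments is $(u+v)-(u-v)=2v\in(0,2\pi)$, so $\cos 2v<1$ and therefore $\inner{\alpha}{\beta}=\xi(u+v)\xi(u-v)(\cos 2v-1)<0$. Thus the first fundamental form is a nondegenerate (Lorentzian) metric; in particular $f_u^*$ and $f_v^*$ are linearly independent and $f_a^*$ is an immersion, indeed a timelike one, on the entire strip. The open interval $(0,\pi)$ is sharp for this argument: at $v=0$ and at $v=\pi$ one has $2v\equiv 0\pmod{2\pi}$, the coefficient $\inner{\alpha}{\beta}$ vanishes, and $\{v=0\}$ is exactly the fold singular locus. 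I expect the only real bookkeeping to lie in justifying the real analytic extension of $\gamma$ to all of $\mathbb{R}$ and the positivity of $\xi$ (the value $a=1$ being excluded); once these are in place the immersion statement is immediate, exactly as in \cite[Lemma~3.1]{FRUYY3}.
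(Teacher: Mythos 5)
Your argument is correct and follows essentially the same route as the proof the paper delegates to \cite[Lemma~3.1]{FRUYY3}: reduce the immersion condition to linear independence of the two null tangent vectors $\gamma'(u\pm v)$, and verify $\inner{\gamma'(u+v)}{\gamma'(u-v)}=\xi(u+v)\,\xi(u-v)\,(\cos 2v-1)<0$ for $v\in(0,\pi)$ (using $a\neq 1$ so that $\xi>0$ everywhere). I see no gaps.
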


\begin{lemma}[{\cite[Lemma 3.2]{FRUYY3}}]
$f_a^*(0,v)$ $(0<v<\pi)$ is a straight line parallel to $x_2$-axis, and 
$f_a^*(\pi/3,v)$ $(0<v<\pi)$ is a straight line parallel to 
$x_0=x_1+\sqrt{3}x_2=0$. 
\end{lemma}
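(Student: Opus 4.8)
The plan is to exploit the parity and symmetry of the integrand defining $\gamma$. First I would note that the direction vector $(1,-\cos t,-\sin t)$ and the scalar $\xi(t)=2/\sqrt{2\cos 3t+a^3+a^{-3}}$ appearing in $\gamma'$ are defined and real analytic for all $t\in\mathbb{R}$ (for $a\neq 1$ the radicand stays positive since $a^3+a^{-3}>2$), so $\gamma$ extends by the same integral formula to a real analytic null curve on all of $\mathbb{R}$. This extension is what I need, because evaluating $f_a^*(u,v)$ requires $\gamma$ on an interval of length $2\pi$ centered at $u$, whereas the fundamental arc is only $[0,\pi/3]$; once the extension is in place both $f_a^*(0,v)$ and $f_a^*(\pi/3,v)$ are well defined for $0<v<\pi$.

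For the first assertion I would use that $\xi$ is even while $(1,-\cos t,-\sin t)$ has even first two components and odd third. Substituting $t\mapsto -t$ in the defining integral gives
\[
\gamma(-v)=\Big(-\!\int_0^v\!\xi\,ds,\ \int_0^v\!\cos s\,\xi\,ds,\ -\!\int_0^v\!\sin s\,\xi\,ds\Big),
\]
so that adding $\gamma(v)$ makes the first two components cancel and
\[
f_a^*(0,v)=\tfrac12\big(\gamma(v)+\gamma(-v)\big)=\Big(0,\ 0,\ -\!\int_0^v\!\sin s\,\xi(s)\,ds\Big).
\]
Since $\sin s\,\xi(s)>0$ on $(0,\pi)$, the third coordinate is strictly monotone, so this is a non-degenerate segment of the $x_2$-axis, which proves the first claim.

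For the second assertion I would use that $\xi$ is symmetric about $t=\pi/3$, namely $\xi(\pi/3\pm t)=2/\sqrt{-2\cos 3t+a^3+a^{-3}}$. Differentiating in $v$,
\[
\frac{d}{dv}f_a^*(\tfrac{\pi}{3},v)=\tfrac12\big(\gamma'(\tfrac{\pi}{3}+v)-\gamma'(\tfrac{\pi}{3}-v)\big),
\]
the common factor $\xi(\pi/3\pm v)$ pulls out, and the two direction vectors combine via sum-to-product into $\big(0,\ 2\sin(\pi/3)\sin v,\ -2\cos(\pi/3)\sin v\big)=\sin v\,(0,\sqrt3,-1)$. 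Hence the velocity is everywhere a nonzero scalar multiple of the constant vector $(0,\sqrt3,-1)$ on $(0,\pi)$, so $f_a^*(\pi/3,v)$ is a straight line with that direction. Since $(0,\sqrt3,-1)$ satisfies both $x_0=0$ and $x_1+\sqrt3\,x_2=0$, this line is parallel to $\{x_0=x_1+\sqrt3\,x_2=0\}$, completing the proof.

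The computations are elementary; the only point requiring care is the real analytic extension of $\gamma$ past the fundamental arc $[0,\pi/3]$ so that the two parameter curves are defined on the full range $0<v<\pi$, together with verifying that the scalar factors remain nonzero there, so that the images are genuine (non-constant) straight lines rather than single points.
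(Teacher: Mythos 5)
Your proof is correct: the parity computation for $f_a^*(0,v)$ and the sum-to-product computation showing $\tfrac{d}{dv}f_a^*(\pi/3,v)$ is a positive multiple of $(0,\sqrt3,-1)$ both check out, and you rightly flag the need to extend $\gamma$ analytically beyond $[0,\pi/3]$ (which is immediate since $\xi$ is globally defined for $a\neq 1$). The paper itself omits the argument, deferring to \cite[Lemma~3.2]{FRUYY3}, and your symmetry-based verification is the standard one used there, so this fills the gap in essentially the intended way.
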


Moreover, since $f_a^*(u,\pi+v)=f_a^*(u,\pi-v)$ holds, 
we have the following lemma. 

\begin{lemma}
$f_a^*(u,\pi)$ $(u\in\mathbb{R})$ corresponds to fold singularities. 
\end{lemma}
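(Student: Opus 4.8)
The plan is to read the stated relation $f_a^*(u,\pi+v)=f_a^*(u,\pi-v)$ as saying that, in the shifted normal coordinate $\tilde v=v-\pi$, the map $f_a^*$ is an even function of $\tilde v$, and then to match $f_a^*$ near $\{v=\pi\}$ to the fold normal form $(u,v^2,0)$ of the Remark following Definition~\ref{df:fold}. This even symmetry is exactly what one expects of a fold, so the remaining work is to check that the singularity along $\{v=\pi\}$ is genuine and non-degenerate. Throughout I would use that $\gamma$ is a non-degenerate null curve and that the mechanism producing the fold along $\{v=0\}$ is precisely that $\gamma''$ is transverse to the lightlike velocity $\gamma'$; the key observation is that the same mechanism reappears at $v=\pi$ after a shift of the parameter of $\gamma$.

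Concretely, first I would differentiate $f_a^*(u,\pi+v)=f_a^*(u,\pi-v)$ in $v$ and set $v=0$ to obtain $\partial_v f_a^*(u,\pi)=\bm{0}$, so that every point of the curve $u\mapsto f_a^*(u,\pi)$ is a singular point of $f_a^*$. From \eqref{eq:timef} one computes $\partial_v f_a^*=\tfrac12(\gamma'(u+v)-\gamma'(u-v))$, and its vanishing at $v=\pi$ reflects the $2\pi$-periodicity of $\gamma'$ (which is also the source of the reflection symmetry). Next I would Taylor expand in $\tilde v$; since by the symmetry only even powers of $\tilde v$ appear,
\[
f_a^*(u,\pi+\tilde v)=f_a^*(u,\pi)+\tfrac12\,\gamma''(u+\pi)\,\tilde v^2+O(\tilde v^4),
\qquad
\partial_u f_a^*(u,\pi)=\gamma'(u+\pi),
\]
using periodicity to simplify the evaluations at $u\pm\pi$. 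Thus near $\{v=\pi\}$ the image is, to leading order, the regular curve $u\mapsto f_a^*(u,\pi)$ with tangent $\gamma'(u+\pi)$, displaced by $\tilde v^2$ in the direction $\gamma''(u+\pi)$.

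The main point, and the place where one must be careful, is the non-degeneracy: one needs $\gamma'(u+\pi)\ne\bm{0}$ (so the $u$-direction is regular) and $\gamma''(u+\pi)$ linearly independent of $\gamma'(u+\pi)$ (so the $\tilde v^2$-displacement is genuinely transverse). Both hold because $\gamma=f_a\circ\hat\gamma$ is a non-degenerate null curve along the entire singular set $\{|z|=1\}$, hence in particular at the parameter values $u\pm\pi$; non-degeneracy is exactly the statement that $\gamma''$ is nowhere proportional to $\gamma'$. Granting this, a linear change of coordinates in the $(u,\tilde v)$-plane together with a diffeomorphism of $\mathbb{L}^3$ carrying $\gamma'(u+\pi)$ and $\gamma''(u+\pi)$ to the first two coordinate directions straightens $f_a^*$ into the normal form $(u,v^2,0)$, so each point of $\{v=\pi\}$ is a fold singularity. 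This is the same computation that makes $\{v=0\}$ a fold, now evaluated at the shifted parameter $u+\pi$, and the only substantive input beyond the given symmetry is the uniform non-degeneracy of $\gamma$.
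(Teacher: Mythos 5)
Your argument is correct and follows the same route as the paper, which justifies the lemma precisely by the reflection symmetry $f_a^*(u,\pi+v)=f_a^*(u,\pi-v)$ (a consequence of the $2\pi$-periodicity of $\gamma'$), with the non-degeneracy supplied by the explicit formula $\gamma'(t)=(1,-\cos t,-\sin t)\,\xi(t)$ showing $\gamma$ is a non-degenerate null curve along the whole singular circle. You simply spell out the Taylor-expansion and normal-form details that the paper leaves implicit, so there is nothing to correct.
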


We set 
\(
  \Omega_a^{\mathrm min}=\{f_a^*(u,v)\;;\;0\le u \le \pi/3,\;0\le v \le \pi\}.
\)

\begin{remark}
For the Schwarz D-type ZMC surface in \cite{FRUYY3}, 
$f_a^*(u,\pi/2)$ is a straight line parallel to $x_0$-axis, 
but we do not have such a symmetry in this case. 
\end{remark}

We set 
\[
\sigma (s) =f_a^*(s,\pi) 
            =\frac{1}{2}\big(\gamma(s+\pi) + \gamma(s-\pi)\big)
\qquad (0\le s \le \pi/3)
\]
to further extend analytically from $f_a^*(u,\pi)$ to spacelike surface.  
Then we have 
\[
\sigma'(s) 
=\begin{pmatrix}
  1 \\ \cos s \\ \sin s
 \end{pmatrix}\hat\xi(s), 
\]
where
\[
\hat\xi(s)=\xi(s+\pi)=\xi(s-\pi)
=\frac{2}{a^3+a^{-3}-2\cos 3s}. 
\]
A direct computation shows the following lemma. 

\begin{lemma}
The following equation  
\[
\sigma'(s)=A\gamma'\left(\frac{\pi}{3}-s\right) 
\]
holds, where 
\[
A=\begin{pmatrix}
      1 & 0           & 0 \\
      0 &-\cos(\pi/3) &-\sin(\pi/3) \\
      0 &-\sin(\pi/3) & \cos(\pi/3)
   \end{pmatrix}.
\]
\end{lemma}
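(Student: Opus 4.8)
The plan is to verify the identity by a direct computation, splitting both $\gamma'$ and $\sigma'$ into a direction vector times a scalar factor. First I would recall the two expressions already established above: the velocity of the singular curve,
\[
\gamma'(t)=\begin{pmatrix} 1 \\ -\cos t \\ -\sin t \end{pmatrix}\xi(t),
\qquad
\xi(t)=\frac{2}{\sqrt{2\cos 3t+a^3+a^{-3}}},
\]
and the velocity of its analytic reflection, $\sigma'(s)=(1,\cos s,\sin s)^{\top}\hat\xi(s)$, where $\hat\xi(s)=\xi(s+\pi)=\xi(s-\pi)$. Substituting $t=\pi/3-s$ expresses $\gamma'(\pi/3-s)$ as the column vector $(1,-\cos(\pi/3-s),-\sin(\pi/3-s))^{\top}$ scaled by $\xi(\pi/3-s)$.

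Next I would expand the matrix product $A\gamma'(\pi/3-s)$ componentwise. The first row of $A$ leaves the first entry equal to $\xi(\pi/3-s)$. Applying the lower $2\times 2$ block to $(-\cos(\pi/3-s),-\sin(\pi/3-s))^{\top}$ produces the second and third entries
\[
\cos\tfrac{\pi}{3}\cos\!\Big(\tfrac{\pi}{3}-s\Big)+\sin\tfrac{\pi}{3}\sin\!\Big(\tfrac{\pi}{3}-s\Big),
\qquad
\sin\tfrac{\pi}{3}\cos\!\Big(\tfrac{\pi}{3}-s\Big)-\cos\tfrac{\pi}{3}\sin\!\Big(\tfrac{\pi}{3}-s\Big).
\]
These are exactly the cosine and sine difference formulas $\cos(\alpha-\beta)$ and $\sin(\alpha-\beta)$ with $\alpha=\pi/3$, $\beta=\pi/3-s$, so they collapse to $\cos s$ and $\sin s$. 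Hence $A\gamma'(\pi/3-s)=(1,\cos s,\sin s)^{\top}\,\xi(\pi/3-s)$, whose direction vector already agrees with that of $\sigma'(s)$.

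It then remains only to match the scalar factors, and this is the single point that needs a little care. I would observe that $\xi$ depends on its argument solely through $\cos 3t$, and compute $\cos 3(\pi/3-s)=\cos(\pi-3s)=-\cos 3s=\cos(3s+3\pi)=\cos 3(s+\pi)$; therefore $\xi(\pi/3-s)=\xi(s+\pi)=\hat\xi(s)$. Combining this with the direction computation gives $A\gamma'(\pi/3-s)=(1,\cos s,\sin s)^{\top}\hat\xi(s)=\sigma'(s)$, as claimed.

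I do not expect any genuine obstacle here: the statement is an exact verification, and its whole content is the two angle-difference identities together with the reflection $\cos(\pi-3s)=-\cos 3s$. The only thing to watch is the sign pattern in the second and third rows of $A$ — whose spatial $2\times 2$ block has determinant $-1$ — since a sign slip there would destroy the cancellation that produces $\cos s$ and $\sin s$.
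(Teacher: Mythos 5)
Your computation is correct and is exactly the ``direct computation'' that the paper invokes without writing out: the paper simply states that the lemma follows from a direct computation, and your angle-difference expansion together with the observation that $\cos 3(\pi/3-s)=-\cos 3s=\cos 3(s+\pi)$ (so $\xi(\pi/3-s)=\hat\xi(s)$) supplies precisely that verification. No difference in approach and no gaps.
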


By this lemma, we have  
\[
\sigma(s)=A\gamma\left(\frac{\pi}{3}-s\right) +\bm{c},
\]
where
\[
\bm{c}=\sigma(0)-A\gamma(\pi/3)
      =f_a^*(0,\pi)-Af_a^*(\pi/3,0)\in\mathbb{L}^3.
\]
Thus we have the following proposition (See Fig.~\ref{fg:h-fp}). 

\begin{proposition}
We denote by $\hat{f}_a$ the spacelike extension from $\sigma(s)$.
Then we have 
\[
\hat{f}_a(z)=-Af_a(z) +\bm{c}
\qquad (|z|\le 1, \;\; 0\le \arg z\le \pi/3).
\]
\end{proposition}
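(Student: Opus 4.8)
The plan is to prove the identity by the uniqueness of the real-analytic extension of a zero mean curvature surface across a non-degenerate null curve, together with the equivariance of the fold-extension construction under isometries of $\mathbb{L}^3$. Recall that $\hat{f}_a$ is, by its very definition, the spacelike real-analytic continuation of the timelike minimal surface $f_a^*$ across its fold curve $\sigma(s)=f_a^*(s,\pi)$. Since a zero mean curvature surface is real-analytic across a non-degenerate null curve (this is the content of the analytic extension theorem quoted above), such a continuation is unique. Hence it suffices to exhibit \emph{some} spacelike maxface which, together with $f_a^*$, fits into a single zero mean curvature surface real-analytic across $\sigma$; by uniqueness that maxface must equal $\hat{f}_a$. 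I will show that $-Af_a+\bm{c}$ is such a maxface.

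First I would check that $T(X):=-AX+\bm{c}$ is an isometry of $\mathbb{L}^3$. The matrix $A$ fixes the $x_0$-axis and acts on the $(x_1,x_2)$-plane by an orthogonal reflection, so $A\in O(1,2)$; since $-I\in O(1,2)$ as well, we get $-A\in O(1,2)$, and $T$ is a rigid motion. Consequently $Tf_a=-Af_a+\bm{c}$ is a maxface congruent to $f_a$, with the same singular set $\{|z|=1\}$, and applying $T$ to the entire configuration shows that the pair $(Tf_a,Tf_a^*)$ is again a single zero mean curvature surface, real-analytic across the null curve $T\circ\gamma$.

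It then remains to identify $T\circ\gamma$ with $\sigma$ and to check that the \emph{timelike} side $Tf_a^*$ reproduces $f_a^*$ near $\sigma$, so that $Tf_a$ continues the correct timelike surface. Integrating the relation $\sigma'(s)=A\gamma'(\pi/3-s)$ established above shows that $\sigma$ is exactly the image of $\gamma$ under $T$, up to the orientation-reversing reparametrization $s\mapsto\pi/3-s$; in particular the fold curve $T\circ\gamma$ of $Tf_a$ coincides with $\sigma$ as a set. For the timelike sides I would use the reconstruction formula $f^*(u,v)=\tfrac12(\gamma(u+v)+\gamma(u-v))$: applying the isometry $T$ and the relation $\sigma=T\circ\gamma$ identifies $Tf_a^*$ with the timelike minimal surface reconstructed from $\sigma$, which is precisely the germ of $f_a^*$ along $v=\pi$, since $\sigma(s)=\tfrac12(\gamma(s+\pi)+\gamma(s-\pi))$ is its fold curve. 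Therefore $Tf_a$ is the spacelike continuation of $f_a^*$ across $\sigma$, and by uniqueness $\hat{f}_a=-Af_a+\bm{c}$.

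The main obstacle is this last matching. It is not enough that $T\circ\gamma$ and $\sigma$ agree as point sets: one must verify that the full real-analytic germ of $(Tf_a,Tf_a^*)$ along $\sigma$ coincides with that of $(\hat{f}_a,f_a^*)$, and in particular that $Tf_a^*$ and $f_a^*$ share the timelike germ so that the two spacelike pieces continue the same surface. I would reduce this to a second-order (Cauchy data) computation at the fold $\{v=\pi\}$: the value and the vanishing first $v$-derivative hold because $v=\pi$ is a fold, while the second $v$-derivatives of both surfaces equal $\tfrac12\big(\gamma''(\cdot+\pi)+\gamma''(\cdot-\pi)\big)$, which is forced by $\sigma(s)=\tfrac12(\gamma(s+\pi)+\gamma(s-\pi))$ together with $\sigma'(s)=A\gamma'(\pi/3-s)$. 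The orientation reversal $s\mapsto\pi/3-s$ merely reparametrizes the surface without changing its image, so it presents no difficulty.
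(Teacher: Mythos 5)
Your proposal is correct and follows essentially the same route as the paper: the paper's own argument is precisely to integrate the relation $\sigma'(s)=A\gamma'(\pi/3-s)$ so as to identify $\sigma$ with the image of $\gamma$ under the isometry $X\mapsto -AX+\bm{c}$ (up to the reparametrization $s\mapsto\pi/3-s$), and then to conclude by the uniqueness of the real-analytic extension across the non-degenerate null curve, a step the paper leaves implicit and you spell out via equivariance of the fold-extension construction. Your version is a faithful, more detailed account of the same proof.
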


%
We set 
\(
  \hat\Omega_a^{\mathrm max}
  =\{\hat{f}_a(z)\;;\;|z|\le 1,\;0\le \arg z \le \pi/3\}. 
\)
Then the boundary of 
\begin{equation}\label{eq:fund-p}
\Omega_a^{\mathrm max}\cup\Omega_a^{\mathrm min}\cup\hat\Omega_a^{\mathrm max}
\end{equation}
consists of two planar curves and two straight lines. 
See Fig.~\ref{fg:h-fp}. 
Now we extend this piece \eqref{eq:fund-p} by reflections with respect to 
planar curves, 
then six copies of \eqref{eq:fund-p} look like ``twisted'' equilateral triangular 
catenoid, see Fig.~\ref{fg:schwarzh-mix}. 
This triangular catenoid is homeomorphic and has the same symmetry to 
the half of rPD family (in $\mathbb{R}^3$) as in Example~\ref{sbsec:rpd}. 
Therefore the ZMC surface we obtain by extending \eqref{eq:fund-p} 
by reflections infinitely many times is triply periodic. 
Though the triply periodic ZMC surface looks like embedded for any $a\in (0,1)$, 
we leave the study of embeddedness of this family for another occasion. 
See Fig.~\ref{fg:schwarzh-mix}. 

\begin{figure}[htbp] 
\begin{center}
\begin{tabular}{cc}
 \includegraphics[width=.35\linewidth]{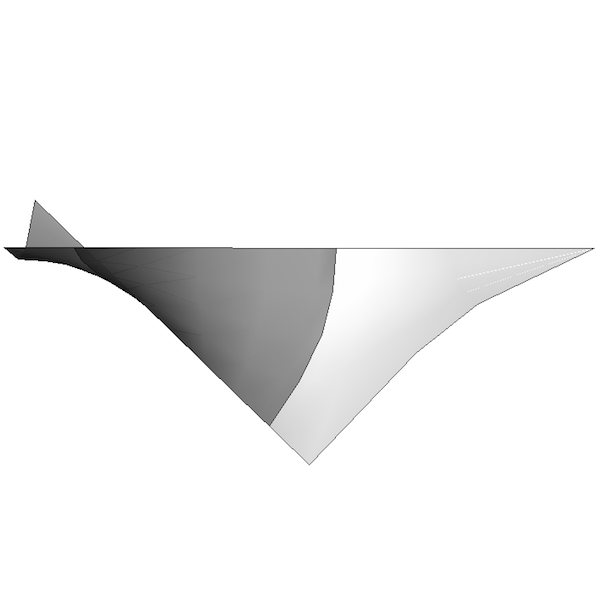} &
 \includegraphics[width=.35\linewidth]{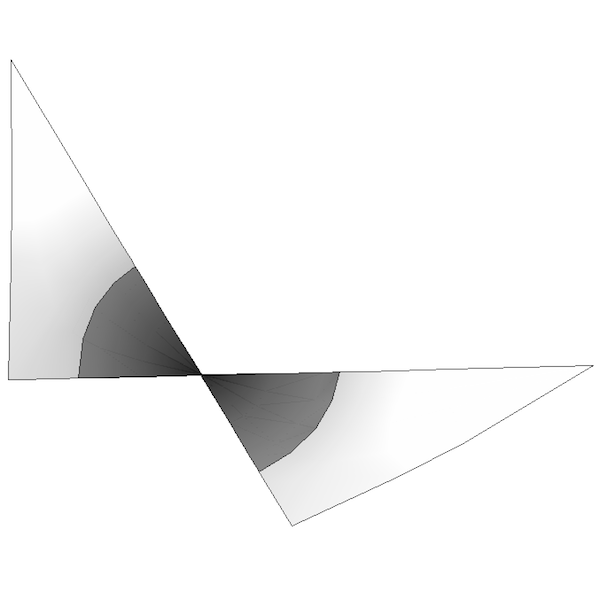} \\
 \multicolumn{2}{c}{$a=0.5$} 
\end{tabular}
\caption{The piece 
$\Omega_a^{\mathrm max}\cup\Omega_a^{\mathrm min}\cup\hat\Omega_a^{\mathrm max}$ 
in different view points. 
The spacelike parts are indicated by grey shades, 
and the timelike part is indicated by black shade. }
\label{fg:h-fp}
\end{center}
\end{figure} 

\begin{figure}[htbp] 
\begin{center}
\begin{tabular}{cc}
 \includegraphics[width=.35\linewidth]{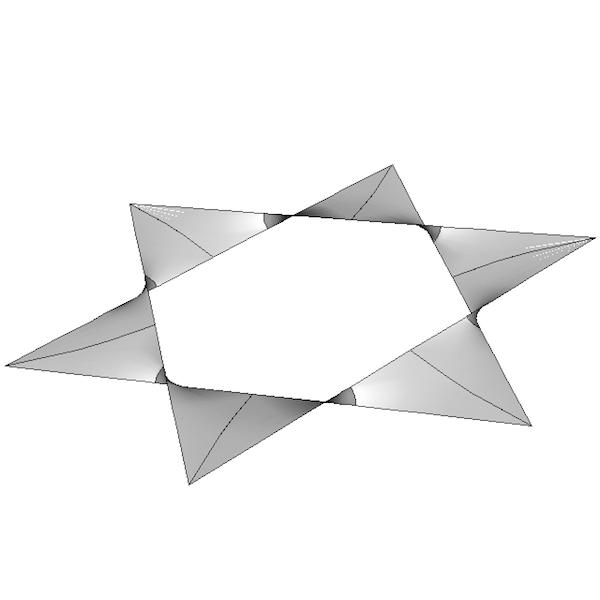} &
 \includegraphics[width=.35\linewidth]{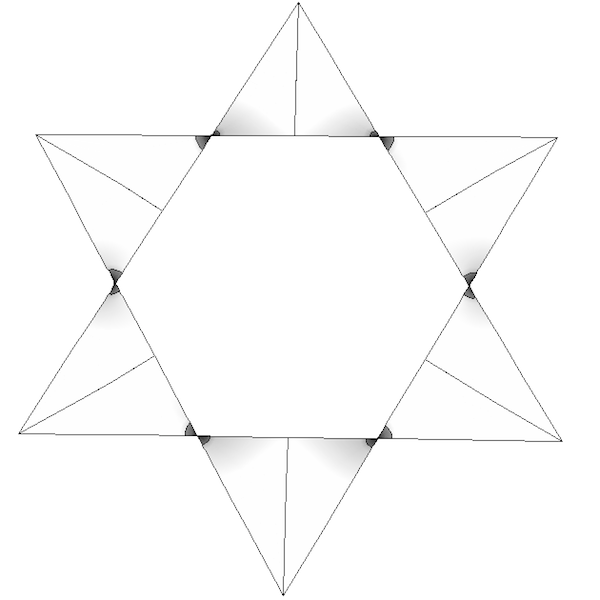} \\
 \multicolumn{2}{c}{$a=0.1$} \\
 \includegraphics[width=.35\linewidth]{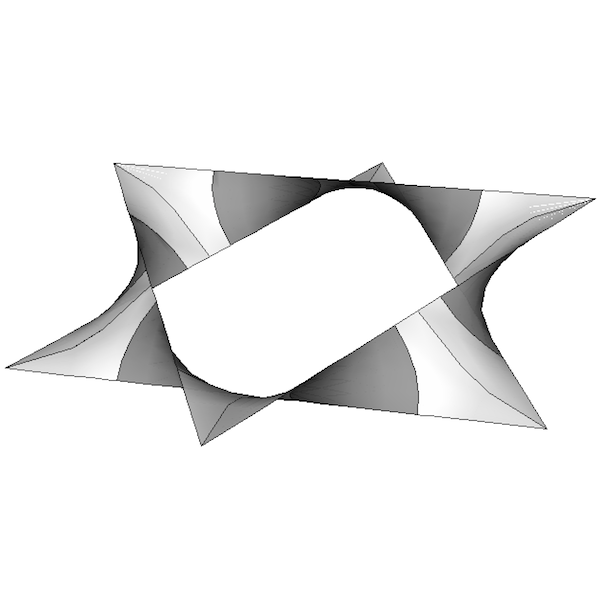} &
 \includegraphics[width=.35\linewidth]{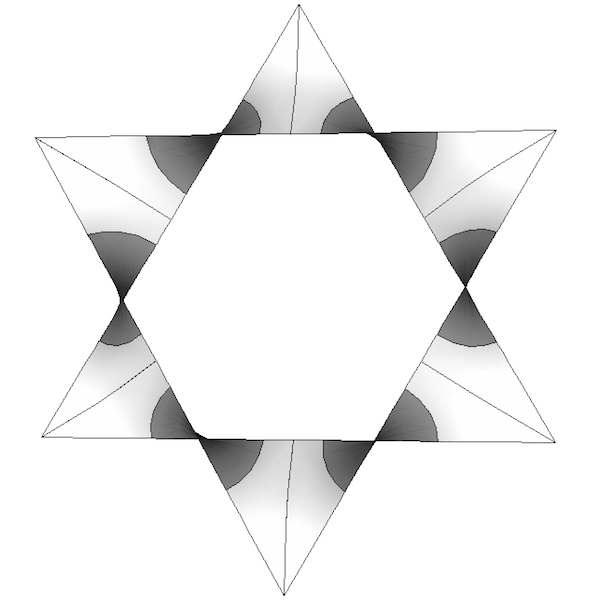} \\
 \multicolumn{2}{c}{$a=0.5$} \\
 \includegraphics[width=.35\linewidth]{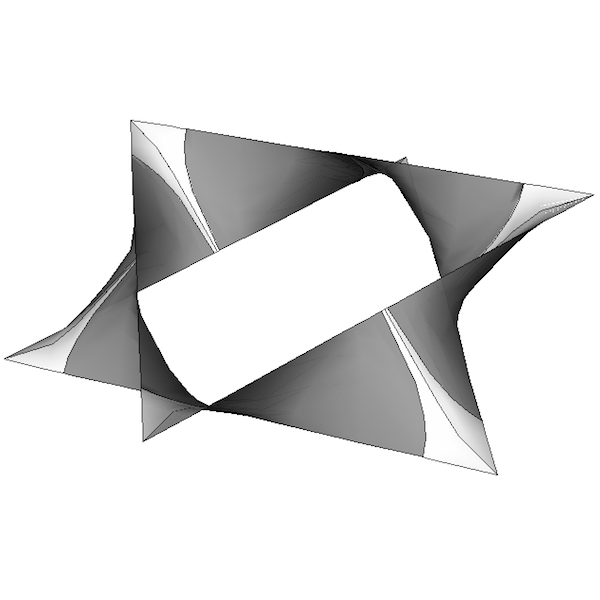} &
 \includegraphics[width=.35\linewidth]{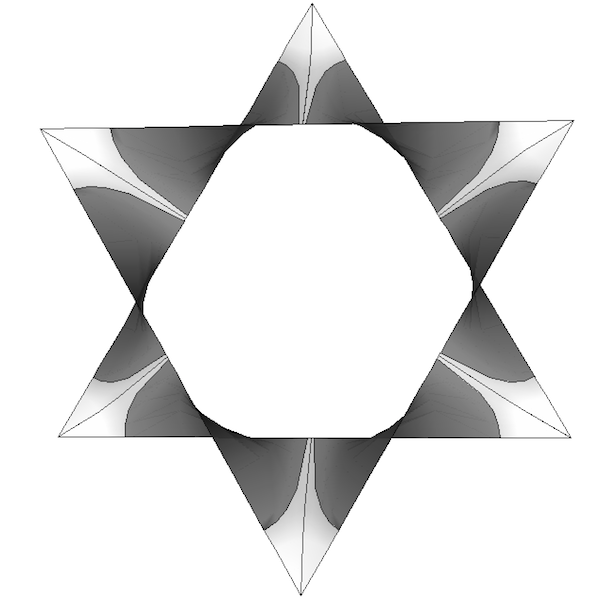} \\
 \multicolumn{2}{c}{$a=0.9$}
\end{tabular}
\caption{Schwarz H-type ZMC surfaces. }
\label{fg:schwarzh-mix}
\end{center}
\end{figure} 

We call the 1 parameter family of this triply periodic ZMC surface by 
{\em Schwarz H-type ZMC surfaces}. 

\begin{remark}
The 1 parameter family of the conjugate surface of the maxface  
we have considered in this section, that is, the maxface with the Weierstrass data 
$g=z$, $\eta=dz/w$, have conelike singularities, 
and the half of the fundamental piece looks like “twisted ”
equilateral triangular Lorentzian catenoid.
See Fig.~\ref{fg:schwarzh-conj}. 
Hence by extending these surfaces by reflections with respect to boundary 
straight lines, 
we have triply periodic maxfaces with conelike singularities. 
\end{remark}

\begin{figure}[htbp] 
\begin{center}
\begin{tabular}{cc}
 \includegraphics[width=.35\linewidth]{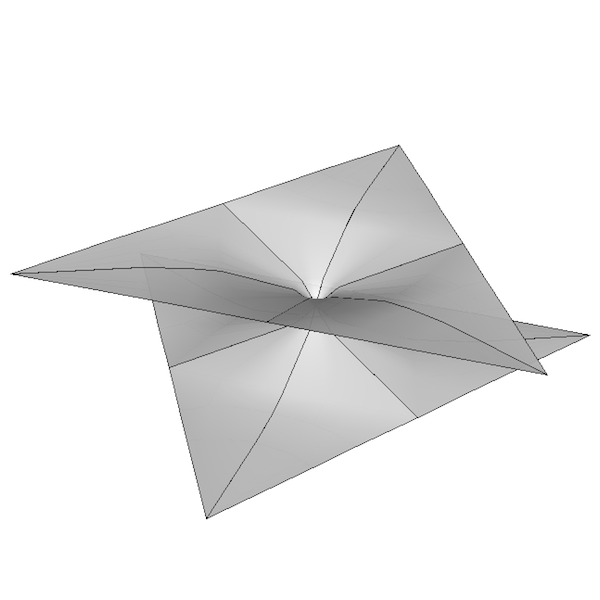} &
 \includegraphics[width=.35\linewidth]{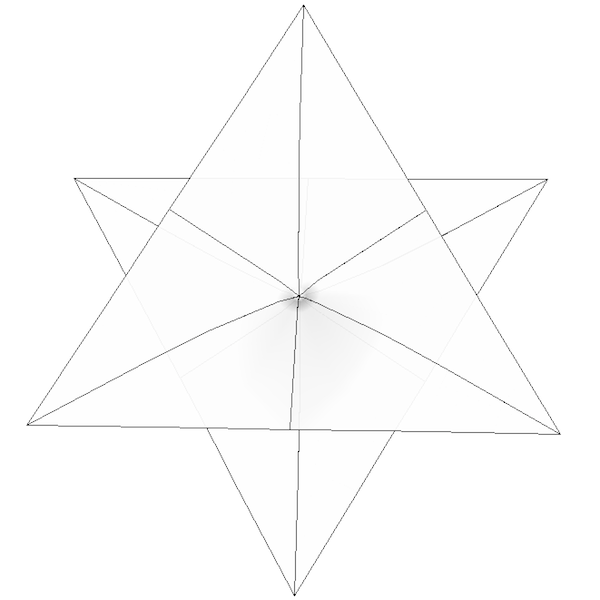} \\
 \multicolumn{2}{c}{$a=0.1$} \\
 \includegraphics[width=.35\linewidth]{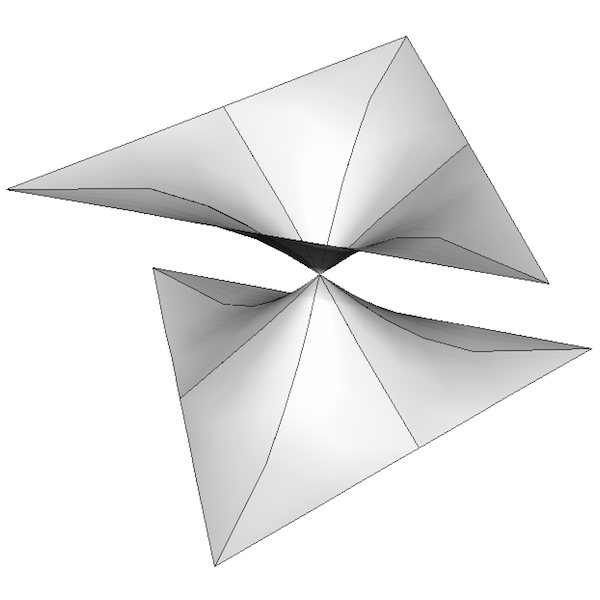} &
 \includegraphics[width=.35\linewidth]{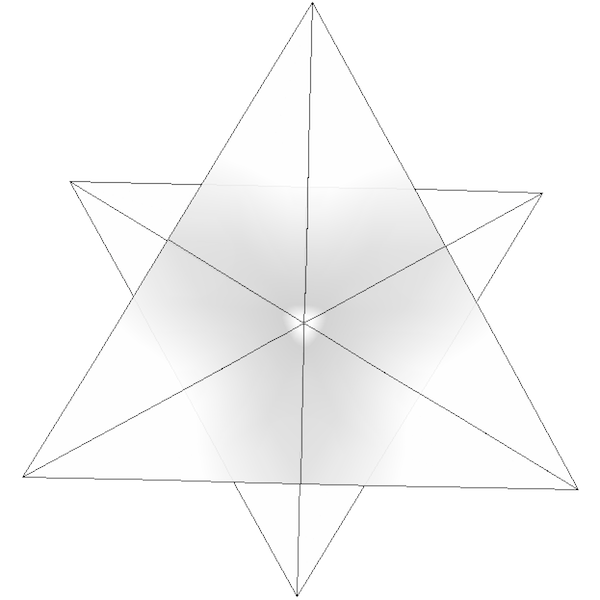} \\
 \multicolumn{2}{c}{$a=0.5$} \\
 \includegraphics[width=.35\linewidth]{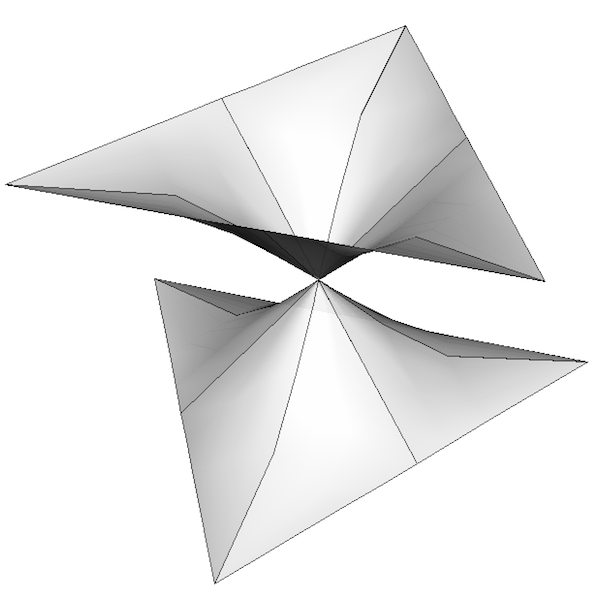} &
 \includegraphics[width=.35\linewidth]{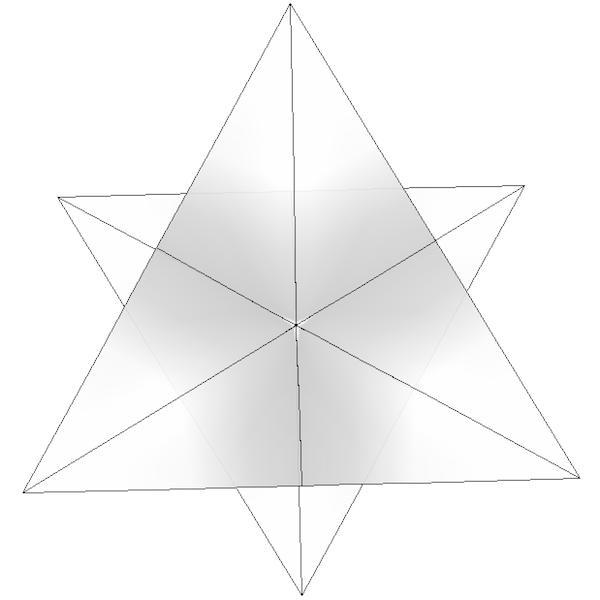} \\
 \multicolumn{2}{c}{$a=0.9$}
\end{tabular}
\caption{The conjugate surfaces of Schwarz H-type surface.}
\label{fg:schwarzh-conj}
\end{center}
\end{figure} 

\section{Limits of Schwarz H-type ZMC surfaces}
\label{sec:lim-h}

In this section we consider the limits of Schwarz H-type ZMC surfaces.
As $a\to 0$, the surface, with rescaled by $\sqrt{a^3+a^{-3}}$, 
converges to the helicoid by the same arguments as in \cite[Remark 3.6]{FRUYY3}. 

Next we consider the limit as $a\to 1$. 
Since the hyperelliptic curve $w^2=z(z^3+a^3)(z^3+a^{-3})$ converges to 
\[
w^2 = z(z^3+1)^2,
\]
the Riemann surface $M_a$ converges to a Riemann surface with six nodes at 
\[
 z=e^{\pi i/3}, \;\;-1,\;\;  e^{-\pi i/3}
\]
and two branch points at 
\[
z=0,\;\;\infty .
\]
This Riemann surface is of genus zero with six nodal singular points. 
Hence the maxface $f_a$ converges to 
\begin{equation}\label{eq:a-1-lim}
f_a\to \pm \Re\int
      \begin{pmatrix}
        -2z \\ 1+z^2 \\i(1-z^2)
      \end{pmatrix}
      \dfrac{i\,dz}{\sqrt{z}(z^3+1)}.
\end{equation}
Let $\zeta$ be a branch of $\zeta^2=z$. 
Then $\zeta$ is a coordinates of this Riemann surface, with six nodes at 
$\zeta =\pm e^{\pm\pi i/6},\;\pm i$, and the right hand side of 
\eqref{eq:a-1-lim} becomes 
\[
\pm 2 \Re\int
      \begin{pmatrix}
        -2\zeta^2 \\ 1+\zeta^4 \\i(1-\zeta^4)
      \end{pmatrix}
      \dfrac{i\,d\zeta}{\zeta^6+1}. 
\]
This surface coincides with the Karcher-type maxface with $k=3$, 
which is a maxface obtained by the conjugate of the maxface with 
the Weierstrass data as in Example~\ref{sbsec:karcher}. 
See Fig.~\ref{fg:schwarzh-karcher}.  

\begin{figure}[htbp] 
\begin{center}
\begin{tabular}{cc}
 \includegraphics[width=.35\linewidth]{figures/schwarzh-mix09g.png} &
 \includegraphics[width=.35\linewidth]{figures/schwarzh-mix09t.png} \\
 \multicolumn{2}{c}{$a=0.9$} \\
 \includegraphics[width=.35\linewidth]{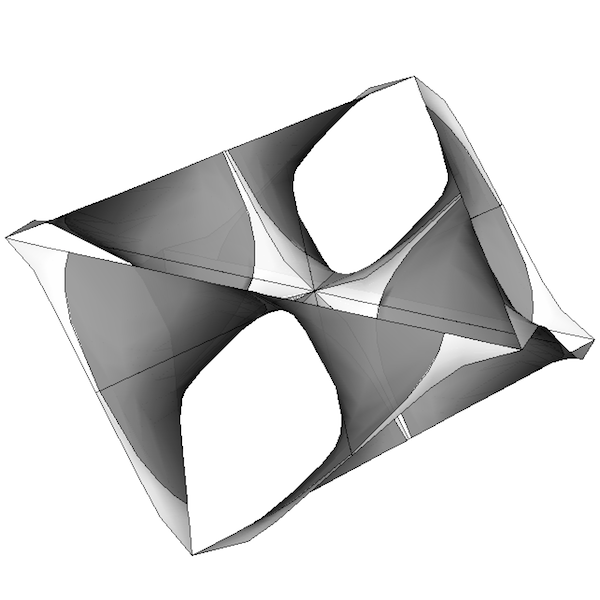} &
 \includegraphics[width=.35\linewidth]{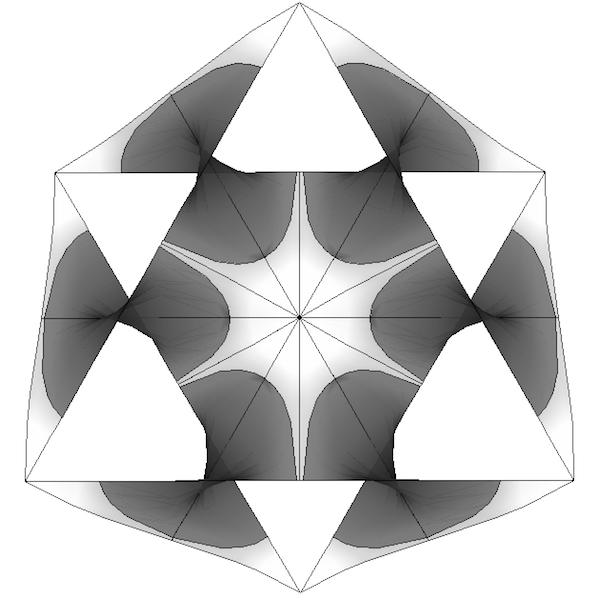} \\
  \multicolumn{2}{c}{$a=0.9$ (the same surface as above with different lattice)} \\
 \includegraphics[width=.35\linewidth]{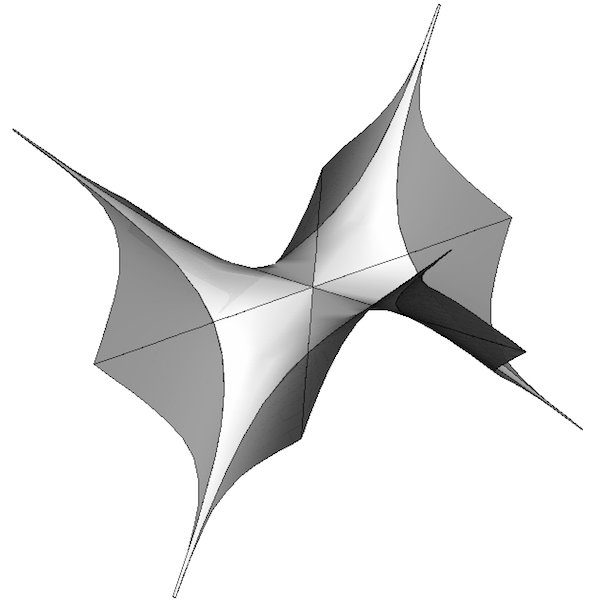} &
 \includegraphics[width=.35\linewidth]{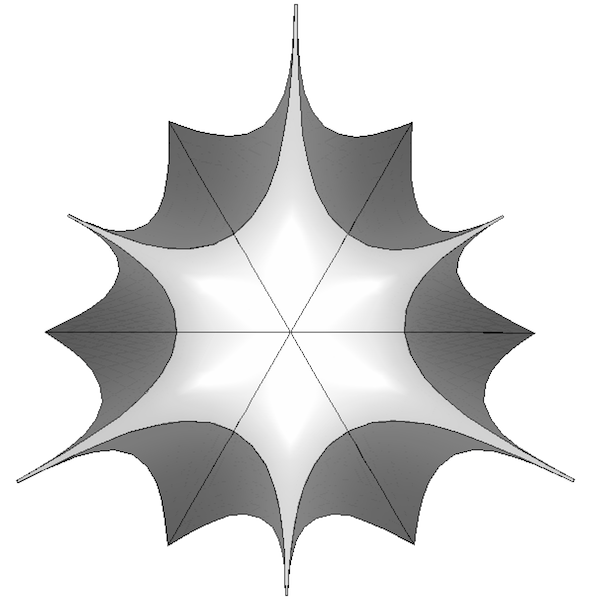} \\
 \multicolumn{2}{c}{$a\to 1$}
\end{tabular}
\caption{The limit of Schwarz H-type ZMC surface as $a\to 1$.}
\label{fg:schwarzh-karcher}
\end{center}
\end{figure} 

\begin{remark}
In contrast to the Karcher tower in $\mathbb{R}^3$, 
the Karcher-type maxface is single-valued on $M_k$ for any $k\ge 2$.  
Moreover, it is easy to verify that each singular point is fold singularity.  

For $k=2$, the image of the analytic extension of the maxface to ZMC surface 
coincides with the entire graph 
\begin{equation}\label{eq:scherk-graph}
x_0=\log\frac{\cosh x_1}{\cosh x_2}
\end{equation}
which is called the Scherk-type ZMC surface. 
\end{remark}

\appendix

\section{Minimal surfaces in $\mathbb{R}^3$}
\label{sec:rpd}

Here we review several examples of minimal surfaces in $\mathbb{R}^3$ 
which are related to ZMC surfaces we constructed in this paper. 
For the detail of these examples, see for example \cite{EFS, FW, Ka1, Ka2}.

\begin{theorem}[Weierstrass representation \cite{O}]\label{th:w-rep}
Let $(g,\,\eta )$ be a pair of a meromorphic function $g$ and a 
holomorphic differential $\eta$ on a Riemann surface $M$ so that 
\begin{equation}\label{eq:1stff}
(1+|g|^2)^2\eta \bar{\eta}
\end{equation}
gives a Riemannian metric on $M$. 
We set 
\begin{equation}\label{eq:Phi}
\Phi = \begin{pmatrix}
          (1-g^2)\eta \\ i(1+g^2)\eta\\ 2g\eta\
        \end{pmatrix}.
\end{equation}
Then
\begin{equation}\label{eq:surf}
f=\Re\int_{z_0}^z \Phi:M\to\mathbb{R}^3\qquad (z_0\in M)
\end{equation}
defines a conformal minimal immersion. 
Moreover, $f$ is single-valued on $M$ if and only if 
\begin{equation}\label{eq:period-min}
\mathrm{Re} \oint_\ell\Phi =\bm{0}
\end{equation}
for any closed curve $\ell$ on $M$. 
Conversely, any minimal surface can be obtained in this manner. 
\end{theorem}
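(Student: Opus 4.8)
The plan is to establish the three assertions separately, with the forward direction resting on two algebraic identities. Write $\Phi=(\phi_1,\phi_2,\phi_3)^{\mathsf T}$ so that $\phi_1=(1-g^2)\eta$, $\phi_2=i(1+g^2)\eta$, $\phi_3=2g\eta$. A direct expansion gives the isotropy relation $\phi_1^2+\phi_2^2+\phi_3^2=0$ together with the norm identity $|\phi_1|^2+|\phi_2|^2+|\phi_3|^2=2(1+|g|^2)^2\eta\bar\eta$. In a local holomorphic coordinate $z=u+iv$ write $\phi_j=\psi_j\,dz$; since $x_j=\Re\int\phi_j$ satisfies $dx_j=\Re(\psi_j)\,du-\Im(\psi_j)\,dv$, splitting the isotropy relation into its real and imaginary parts yields $E=G$ and $F=0$ for the coefficients of the induced first fundamental form, which is conformality. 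The norm identity then shows $E=G=\tfrac12\sum_j|\psi_j|^2$ and that the induced metric equals precisely $(1+|g|^2)^2\eta\bar\eta$, so by the hypothesis that this quantity is Riemannian it is positive definite and $f$ is an immersion.

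For minimality I would use that each coordinate function $x_j=\Re\int\phi_j$ is harmonic, being locally the real part of a holomorphic primitive of the holomorphic $1$-form $\phi_j$. For a conformal immersion with conformal factor $\lambda^2$ the flat Laplacian of the parameter applied to $f$ equals $2\lambda^2 H\nu$, where $H$ is the mean curvature and $\nu$ the unit normal; harmonicity of all three coordinates therefore forces $H\equiv 0$. The single-valuedness statement is then immediate: $\Phi$ is holomorphic, hence closed, so $\int_{z_0}^z\Phi$ is locally independent of the path, and its real part descends to a well-defined map on $M$ exactly when the real part of every period $\oint_\ell\Phi$ vanishes, which by homology it suffices to check on a set of generators of $H_1(M)$.

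For the converse I would start from a minimal immersion $f:M\to\mathbb{R}^3$, endow $M$ with the conformal structure of the induced metric so that $f$ becomes a conformal harmonic map, and set $\phi_j=2\,\partial x_j=2(\partial x_j/\partial z)\,dz$. Harmonicity of $x_j$ makes each $\phi_j$ a holomorphic $1$-form and conformality gives $\sum_j\phi_j^2=0$, so one recovers the data algebraically by $\eta=\tfrac12(\phi_1-i\phi_2)$ and $g=\phi_3/(\phi_1-i\phi_2)$; the isotropy relation guarantees $g^2=-(\phi_1+i\phi_2)/(\phi_1-i\phi_2)$ and reproduces the stated form of $\Phi$, with $f=\Re\int\Phi$ up to a translation. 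The step I expect to require the most care is checking that this inversion is globally consistent: one must verify that $g$ extends as a meromorphic function across the zeros of $\phi_1-i\phi_2$, where it acquires poles, and that $\eta$ is a genuine holomorphic differential with no poles, using the nondegeneracy of the immersion. The underlying analytic input, the existence of isothermal coordinates making $f$ conformal, is what renders the conformal structure on $M$ available in the first place.
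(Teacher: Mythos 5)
The paper does not prove this theorem: it is quoted as a classical result with a citation to Osserman, so there is no in-paper argument to compare against. Your proposal is the standard textbook proof of the Weierstrass representation (isotropy $\phi_1^2+\phi_2^2+\phi_3^2=0$ for conformality, harmonicity of the coordinates for $H\equiv 0$, vanishing real periods for single-valuedness, and recovery of $(g,\eta)$ via $\eta=\tfrac12(\phi_1-i\phi_2)$, $g=\phi_3/(\phi_1-i\phi_2)$), and all the identities you invoke check out. The only loose end is the degenerate case $\phi_1-i\phi_2\equiv 0$ in the converse (constant Gauss map, i.e.\ a planar piece), which must be excluded or handled by a rotation of coordinates before $g$ can be defined as the stated ratio; apart from that, your appeal to ``nondegeneracy of the immersion'' correctly covers the pole/zero matching between $g$ and $\eta$.
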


The pair $(g,\,\eta)$ in Theorem~\ref{th:w-rep} is called the {\it Weierstrass data} of $f$. 
\begin{remark}
To verify the periodicity of surfaces, consider the following map. 
\begin{equation}\label{eq:perf}
\mathrm{Per}(f)=
\left\{\mathrm{Re}\oint_{\ell}
        \Phi\;;\;
       \ell\in H_1(M,\Bbb{Z})\right\}.
\end{equation}
The periodicity can be determined in the following way: 
\begin{itemize}
\item If $\mathrm{Per}(f)=\{\bm{0}\}$,  that is, 
      $f$ satisfies the condition (\ref{eq:period-min}) for any closed curve 
      $\ell$ on $M$, 
      then $f:M\to\Bbb{R}^3$ is well-defined on $M$, 
      that is, $f$ is \textit{non-periodic}. 
\item If there exists only one direction 
      $\bm{v}\in\Bbb{R}^3\setminus\{\bm{0}\}$ such that  
      \[
      \mathrm{Per}(f)\subset\Lambda_1=\{n\bm{v}\;;\;n\in\Bbb{Z}\},
      \] 
      then $f$ is \textit{singly periodic}.  In this case, 
      $f$ is well-defined in $\Bbb{R}^3/\Lambda_1\approx\Bbb{R}^2\times S^1$. 
      (A surface invariant under \textit{screw-motions} $\Lambda_1+R$, 
      where $R$ is a rotation around an axis in the direction of $\Lambda_1$, 
      is also singly periodic.  
      See, for example, \cite{CHM} and the references therein.)
\item If there exist two linearly independent vectors 
      $\bm{v}_1,\bm{v}_2\in\Bbb{R}^3$ (with $\mathrm{span}\{\bm{v}_1,\bm{v}_2\}$ 
      uniquely determined) such that
      \[ 
      \mathrm{Per}(f)\subset
      \Lambda_2=
      \left\{\sum_{j=1}^2n_j\bm{v}_j\;;\;n_j\in\Bbb{Z}\right\},
      \]
      then $f$ is \textit{doubly periodic}.  In this case, 
      $f$ is well-defined in $\Bbb{R}^3/\Lambda_2\approx T^2\times\Bbb{R}$.  
\item If there exist three linearly independent vectors 
      $\bm{v}_1,\bm{v}_2,\bm{v}_3\in\Bbb{R}^3$ such that  
      \[
      \mathrm{Per}(f)\subset
      \Lambda_3=
      \left\{\sum_{j=1}^3n_j\bm{v}_j\;;\;n_j\in\Bbb{Z}\right\},
      \]
      then $f$ is \textit{triply periodic}.  In this case, 
      $f$ is well-defined in $\Bbb{R}^3/\Lambda_3\approx T^3$.  
\end{itemize}
\end{remark}

\begin{remark}
The first fundamental form $ds^2$ and the second fundamental form 
${\rm I}\!{\rm I}$ of the surface \eqref{eq:surf} are given by
\[
ds^2=\left( 1+|g|^2\right)^2\eta\bar\eta , \qquad
{\rm I}\!{\rm I}=-\eta dg-\overline{\eta dg}.
\]
Moreover, $g:M\to\mathbb{C}\cup\{\infty\}$ coincides with the composition of 
the Gauss map $G:M\to S^2$ of the minimal surface and 
the stereographic projection 
$\sigma:S^2\to\mathbb{C}\cup\{\infty\}$, that is, 
$g=\sigma\circ G$.
So we call $g$ the Gauss map of the minimal surface. 
\end{remark}

\begin{remark}[A historical remark about triply periodic minimal surfaces]\label{rm:tpms}
The first examples of triply periodic minimal surfaces in $\mathbb{R}^3$ are 
found by H.~A.~Schwarz in the 19th century \cite{Schw}. 
Then in 1970, a NASA scientist A.~Schoen found many more examples, 
and he named three of Schwarz' examples P surface, D surface, and H family, because 
they have the symmetry related to those of the {\it primitive} cubic lattice, 
{\it diamond} crystal structure, and {\it hexagonal} crystal structure, 
respectively \cite{Sch}. 

In 1989, H.~Karcher found a 1-parameter family of triply periodic 
minimal surfaces \cite{Ka1}. 
Since a half of the fundamental piece of the surface looks like
{\it twisted} (equilateral) {\it trianglar} catenoid (see Fig.~\ref{fg:rPD}), 
he named the family TT, but since the family contains both Schwarz P and D surfaces, 
the family is now called rPD family.
See for example \cite{FK}.
\end{remark}

\begin{example}[Schwarz rPD family]
\label{sbsec:rpd}
For a constant $a\in (0,\infty)$, we set $M_a$ a Riemann surface 
of genus 3 defined by the hyperelliptic curve
\[
  w^2=z(z^3-a^3)(z^3+a^{-3}).
\]
We define the Weierstrass data
\begin{equation}\label{eq:w-data}
  g=z,\qquad \eta=\frac{dz}{w}.
\end{equation}
Then 
\begin{equation}\label{eq:w-rep-min}
 f_a=\begin{pmatrix}x_1 \\ x_2 \\ x_3\end{pmatrix}
    =\Re\int
   \begin{pmatrix}
     1-g^2 \\ i(1+g^2) \\ 2g
   \end{pmatrix} \eta
\end{equation}
gives a 1-parameter family $\{f_a\}_{0<a<\infty}$ of embedded triply periodic 
minimal surfaces in $\mathbb{R}^3$. 
This family is called the {\it Schwarz rPD family}. 
When $a=1/\sqrt{2}$, the surface coincides with Schwarz P surface, 
and when $a=\sqrt{2}$, the surface coincides with Schwarz D surface. 

As we mentioned in Remark~\ref{rm:tpms}, 
a half of the fundamental piece of rPD surface looks like 
``twisted'' equilateral triangular catenoid.  
See Fig.~\ref{fg:rPD}. 
\end{example}

\begin{figure}[htbp] 
\begin{center}
\begin{tabular}{cc}
 \includegraphics[width=.35\linewidth]{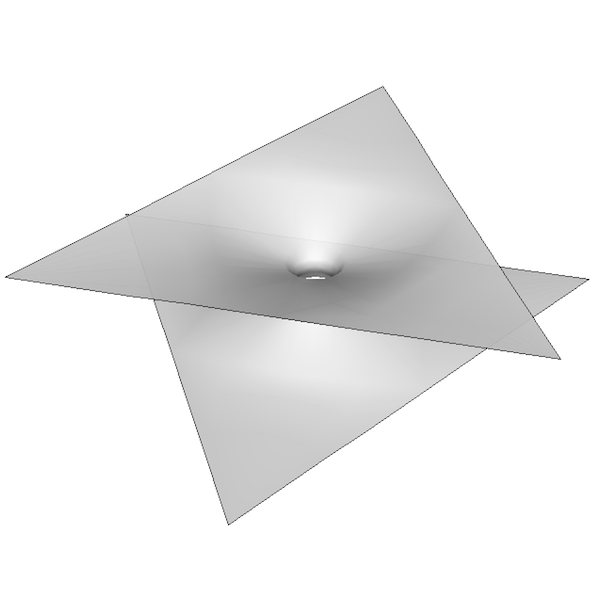} &
 \includegraphics[width=.35\linewidth]{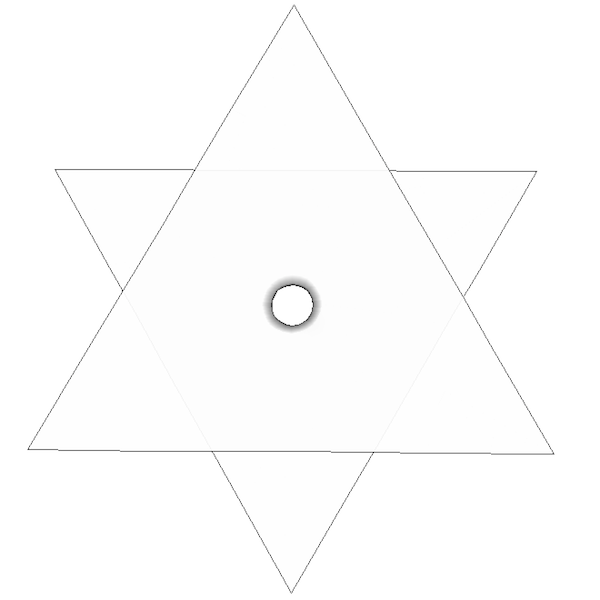} \\
 \multicolumn{2}{c}{$a=0.1$}  \\
 \includegraphics[width=.35\linewidth]{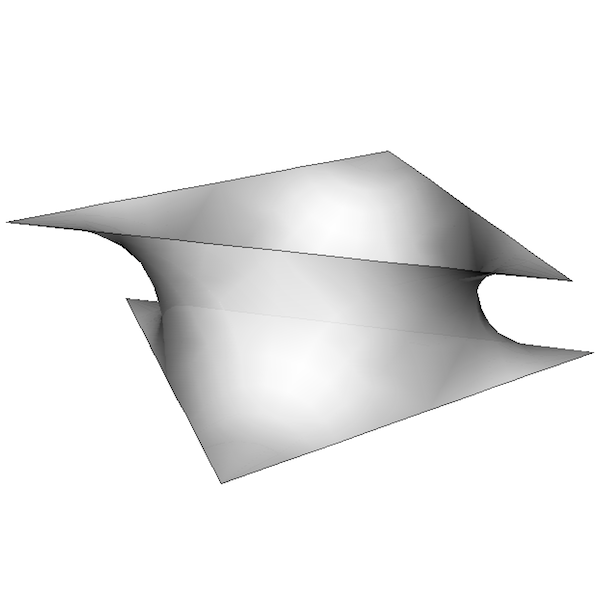} &
 \includegraphics[width=.35\linewidth]{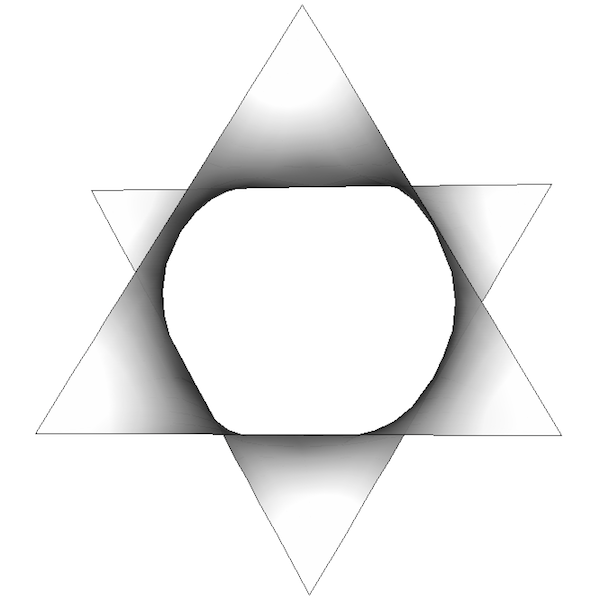} \\
 \multicolumn{2}{c}{$a=1.0$}  \\
 \includegraphics[width=.35\linewidth]{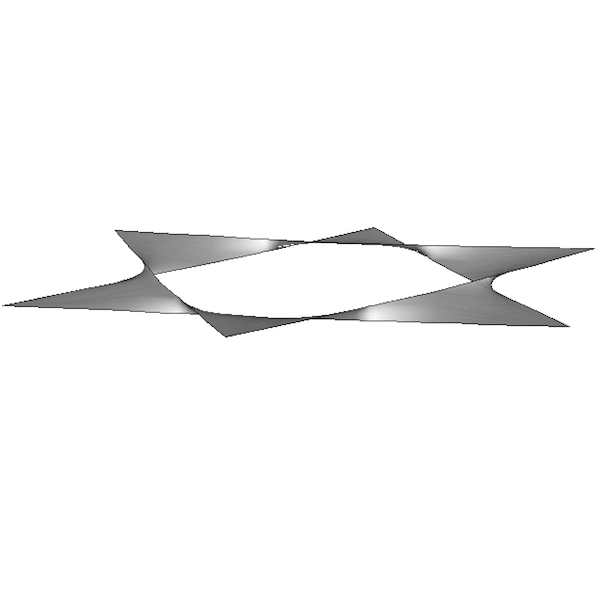} &
 \includegraphics[width=.35\linewidth]{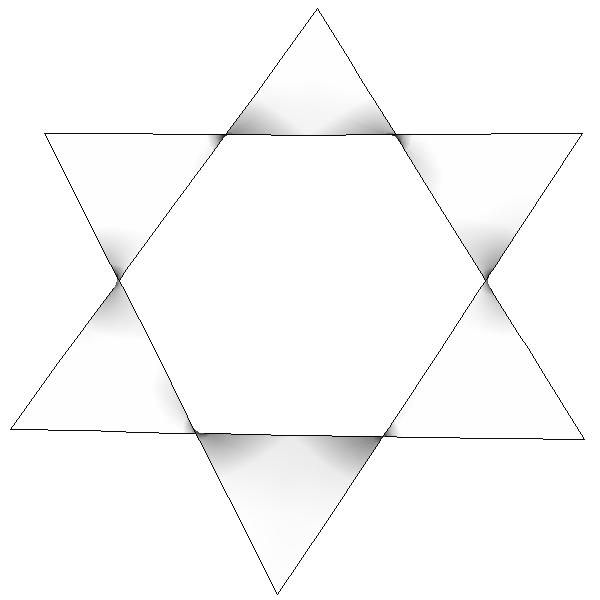} \\
 \multicolumn{2}{c}{$a=10.0$}  
\end{tabular}
\caption{Schwarz rPD surfaces.}
\label{fg:rPD}
\end{center}
\end{figure} 

Fig.~\ref{fg:rpd-p} shows the relation between Schwarz P and rPD for $a=1/\sqrt{2}$. 

\begin{figure}[htbp] 
\begin{center}
\begin{tabular}{cc}
 \includegraphics[width=.35\linewidth]{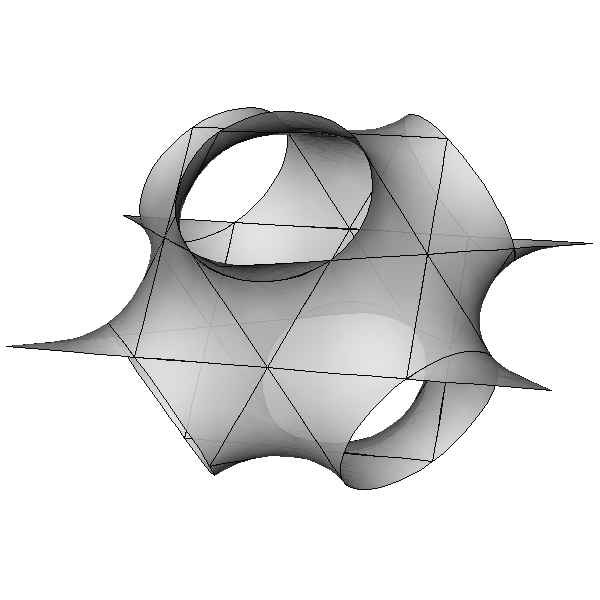} &
 \includegraphics[width=.35\linewidth]{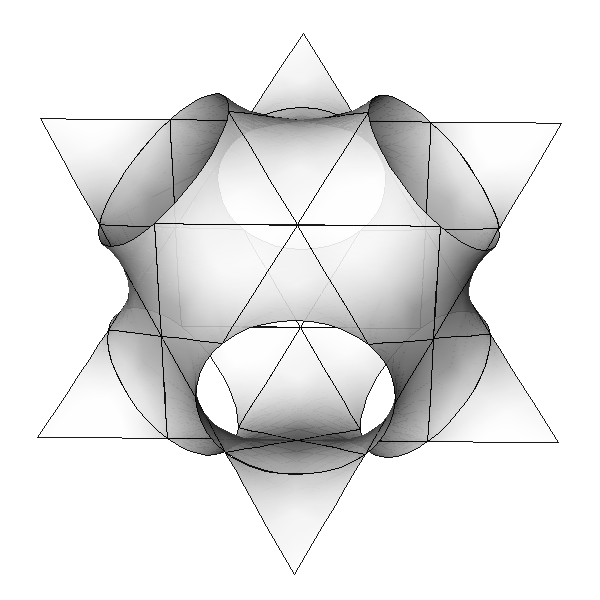} \\
 \multicolumn{2}{c}{$a=1/\sqrt{2}$} 
\end{tabular}
\caption{Relation between Schwarz P and rPD.}
\label{fg:rpd-p}
\end{center}
\end{figure} 

\begin{example}[Schwarz H family]
\label{sbsec:rh}

For a constant $a\in (0,\infty)$, we set $M_a$ a Riemann surface 
of genus 3 defined by the hyperelliptic curve
\[
  w^2=z(z^3+a^3)(z^3+a^{-3}).
\]
Then the family $\{f_a\}_{0<a<1}$ of minimal surfaces \eqref{eq:w-rep-min} 
with the Weierstrass data \eqref{eq:w-data}
is a family of embedded triply periodic minimal surfaces in $\mathbb{R}^3$. 
This family is called the {\it Schwarz H family}. 
As $a\to 0$, $f_a$, with rescaled by $\sqrt{a^3+a^{-3}}$, converges to catenoid. 
Also, as $a\to 1$, $f_a$ converges to Karcher tower with $k=3$ 
(see Example~\ref{sbsec:karcher}).

A half of the fundamental piece of Schwarz H surface looks like 
``non-twisted '' equilateral triangular catenoid.  
See Fig.~\ref{fg:H}. 

Fig.~\ref{fg:Hconj} shows the conjugate surface of Schwarz H surface. 
Each vertex of the hexagon in the right hand side figure lies 
in the straight line parallel to $x_3$-axis.  
Hence after reflections with respect to these lines, 
we see that the surface has self-intersections. 
\end{example}

\begin{figure}[htbp] 
\begin{center}
\begin{tabular}{cc}
 \includegraphics[width=.35\linewidth]{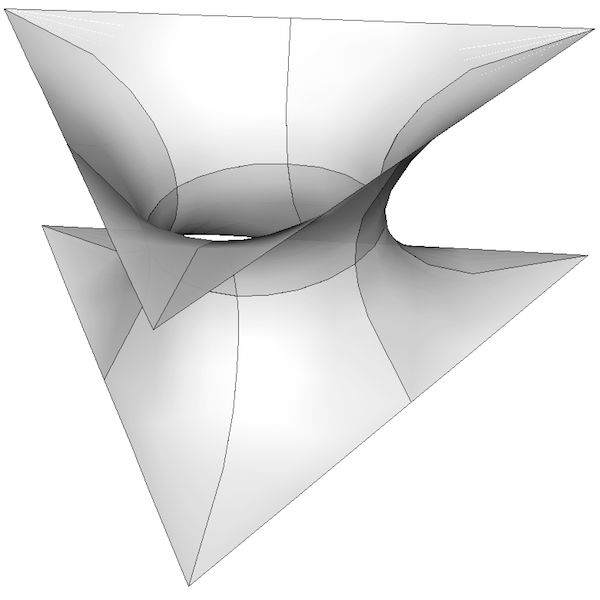} &
 \includegraphics[width=.35\linewidth]{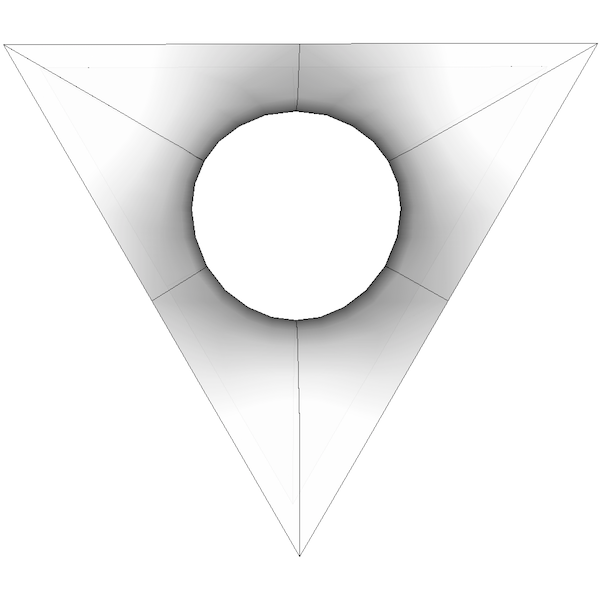} 
\end{tabular}
\caption{Schwarz H surface ($a=0.5$).}
\label{fg:H}
\end{center}
\end{figure} 

\begin{figure}[htbp] 
\begin{center}
\begin{tabular}{cc}
 \includegraphics[width=.35\linewidth]{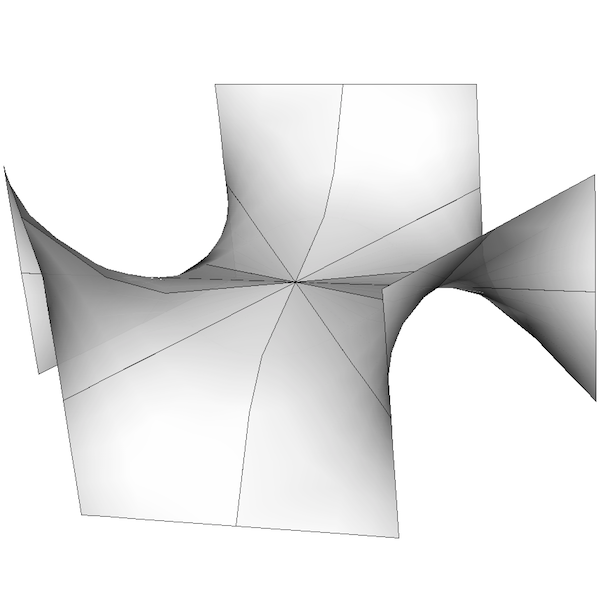} &
 \includegraphics[width=.35\linewidth]{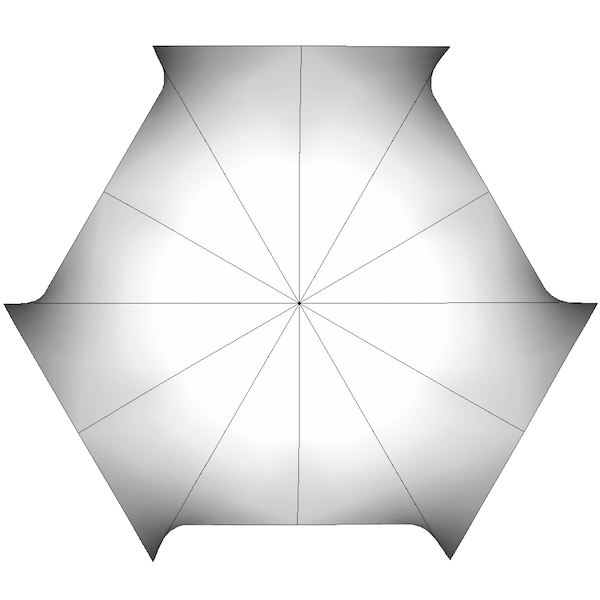} 
\end{tabular}
\caption{The comjugate surface of Schwarz H surface ($a=0.5$).}
\label{fg:Hconj}
\end{center}
\end{figure} 

\begin{example}[Karcher tower]
\label{sbsec:karcher}

For an integer $k\ge 2$, we define $M_k$ by 
\begin{equation}\label{eq:karcher-M}
  M_k=(\mathbb{C}\cup\{\infty\})\setminus
      \{z\in\mathbb{C}\;;\;z^{2k}=-1\}.
\end{equation}
Then the minimal surface \eqref{eq:w-rep-min} with the Weierstrass data 
\[
  g=z^{k-1},\qquad \eta=\frac{dz}{z^{2k}+1}
\]
is embedded singly periodic with $2k$ ends in $\mathbb{R}^3$.  
This minimal surface is called the {\it Karcher tower}.
When $k=2$, this surface coincides with the Scherk tower 
(the conjugate surface of doubly periodic Scherk surface). 
See Fig.~\ref{fg:karcher}. 
\end{example}

\begin{figure}[htbp] 
\begin{center}
\begin{tabular}{cc}
 \includegraphics[width=.30\linewidth]{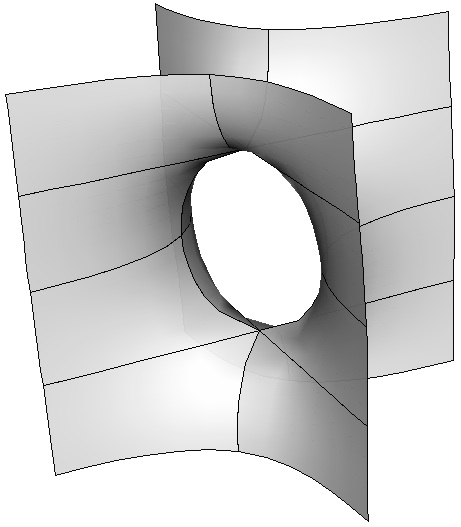} &
 \includegraphics[width=.40\linewidth]{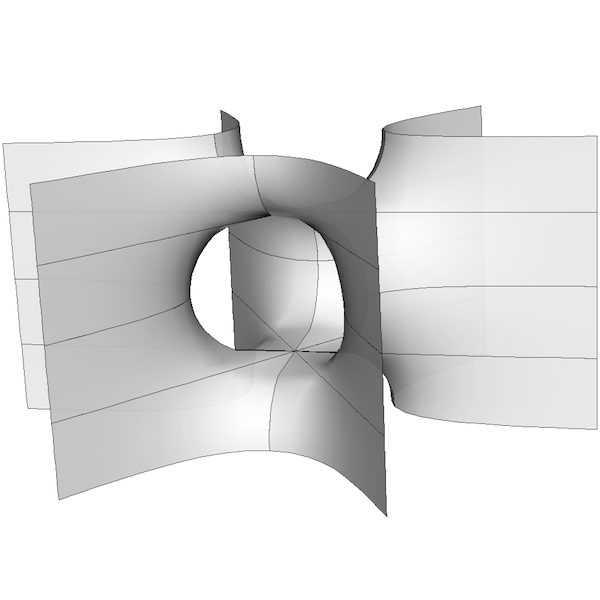} \\
 $k=2$ & $k=3$  
\end{tabular}
\caption{The Karcher tower.}
\label{fg:karcher}
\end{center}
\end{figure} 


\end{document}